\newtheorem{thm}[equation]{Theorem}
\newtheorem*{thm*}{Theorem}
\newtheorem*{lem*}{Lemma}
\newtheorem{lem}[equation]{Lemma}
\newtheorem*{prop*}{Proposition}
\newtheorem{cor}[equation]{Corollary}
\theoremstyle{definition}
\newtheorem{defn}[equation]{Definition}
\numberwithin{equation}{section}
\newcommand{\U}{C}      
\newcommand{\W}{D}      
\newcommand{\Pmin}{\mathcal{P}_{min}}      
\title{Isoperimetric Formulas for Hyperbolic Animals}
\author[E.~Rold\'an]{\'Erika Rold\'an}
\address[E.~Rold\'an]{Zentrum Mathematik, TU M\"unchen, Garching b. M\"unchen, Germany}
\email{erika.roldan@ma.tum.de}
\author[R.~Toala-Enriquez]{Rosemberg Toala-Enriquez}
\address[R.~Toala-Enriquez]{Maths Learning Centre, De Montfort University}
\email{rosemberg.te@dmu.ac.uk}
\begin{document}
\begin{abstract}
An animal is a planar shape formed by attaching congruent regular polygons along their edges. In 1976, Harary and Harborth gave closed isoperimetric formulas for Euclidean animals. Here, we provide analogous formulas for hyperbolic animals. We do this by proving a connection between Sturmian words and the parameters of a discrete analogue of balls in the graph determined by hyperbolic tessellations. This reveals a complexity in hyperbolic animals that is not present in Euclidean animals.


\end{abstract}
\subjclass[2020]{00A69, 05A16, 05A20, 05B50, 52B60, 52C05,  05C07, 05C10, 52C20, 05D99}
\keywords{Extremal combinatorics, enumerative combinatorics, polyforms, polyominoes, polyiamonds, hexiamonds, animals, hyperbolic tessellations, isoperimetric inequalities.} 

\maketitle

\section{Introduction}
An \emph{animal} is a planar shape with a connected interior consisting of a finite number of tiles on a regular tessellation of the plane. Using Schläfli's notation, a $\{p,q\}$-tessellation consists of tiling the plane with regular $p$-gons with exactly $q$ of these polygons meeting at each vertex. We call an animal living in a $\{p,q\}$-tessellation a $\{p,q\}$-animal. The polygons forming an animal are called tiles. We define the perimeter to be the number of edges that are in the boundary of the animal. 
Let $\mathcal{P}^{p,q}_{min}(n)$ be the minimum perimeter that a $\{p,q\}$-animal with $n$ tiles can have.
In 1976 \cite{harary1976extremal}, Harary and Harborth gave the following isoperimetric formulas for Euclidean animals, that is, for all $\{p,q\}$-animals such that $(p-2)(q-2) = 4$.

    \begin{align*}
        \mathcal{P}^{3,6}_{min}(n) &= 2 \left\lceil \frac12( n+\sqrt{6n}) \right\rceil - n,  \\    
        \mathcal{P}_{min}^{4,4}(n) &= 2 \left\lceil 2\sqrt{n} \right\rceil,                  \\ 
        \mathcal{P}_{min}^{6,3}(n) &=  2 \left\lceil \sqrt{12n-3} \right\rceil.  
    \end{align*}

Harary and Harborth called animals attaining minimum perimeter \textit{extremal animals}. To our surprise, for almost 50 years, isoperimetric formulas for hyperbolic animals were unknown and in general extremal hyperbolic animals were unstudied. Recently \cite{MaRoTo}, Malen, Rold\'an and Toal\'a-Enr\'iquez gave a constructive way of generating a sequence of extremal hyperbolic animals that follows a spiral pattern---See Figure \ref{fig:spirals}. Based on these extremal spiral animals they were able to provide a recursive way of computing $\mathcal{P}^{p,q}_{min}(n)$ for hyperbolic animals.  Nevertheless, a closed formula for $\mathcal{P}^{p,q}_{min}(n)$ with $(p-2)(q-2)>4$ was still missing. Here, we provide these closed formulas. 

In our main result, we prove that for all $(p-2)(q-2) > 4$ and $n> p (q-2)$,   
\begin{align} \label{eqn:P_min}
            \Pmin^{p,q}(n) = 
                            \left( p-2-\frac{2}{\beta}\right) n +  \epsilon_{p,q}(n) , 
            \end{align}
where $\epsilon_{p,q}(n) $ is an error term that we explicitly define in Theorem \ref{thm:P_min}, Section \ref{sec:mainresults} and that is uniformly bounded for all $p,q,n$. The constant $\beta$ depends only on $p$ and $q$ and is given by
    \begin{equation}    \label{eqn:beta}
        \beta=\frac{(p-2)(q-2)+\sqrt{(p-2)^2(q-2)^2-4(p-2)(q-2)}}{2(p-2)}.
    \end{equation} 
For $n\leq p (q-2)$, the error term in Formula (\ref{eqn:P_min}) takes a different form. For completeness, we will also provide isoperimetric formulas for these values of $n$.  

\begin{figure}  
    \centering
    \includegraphics[scale=2.75]{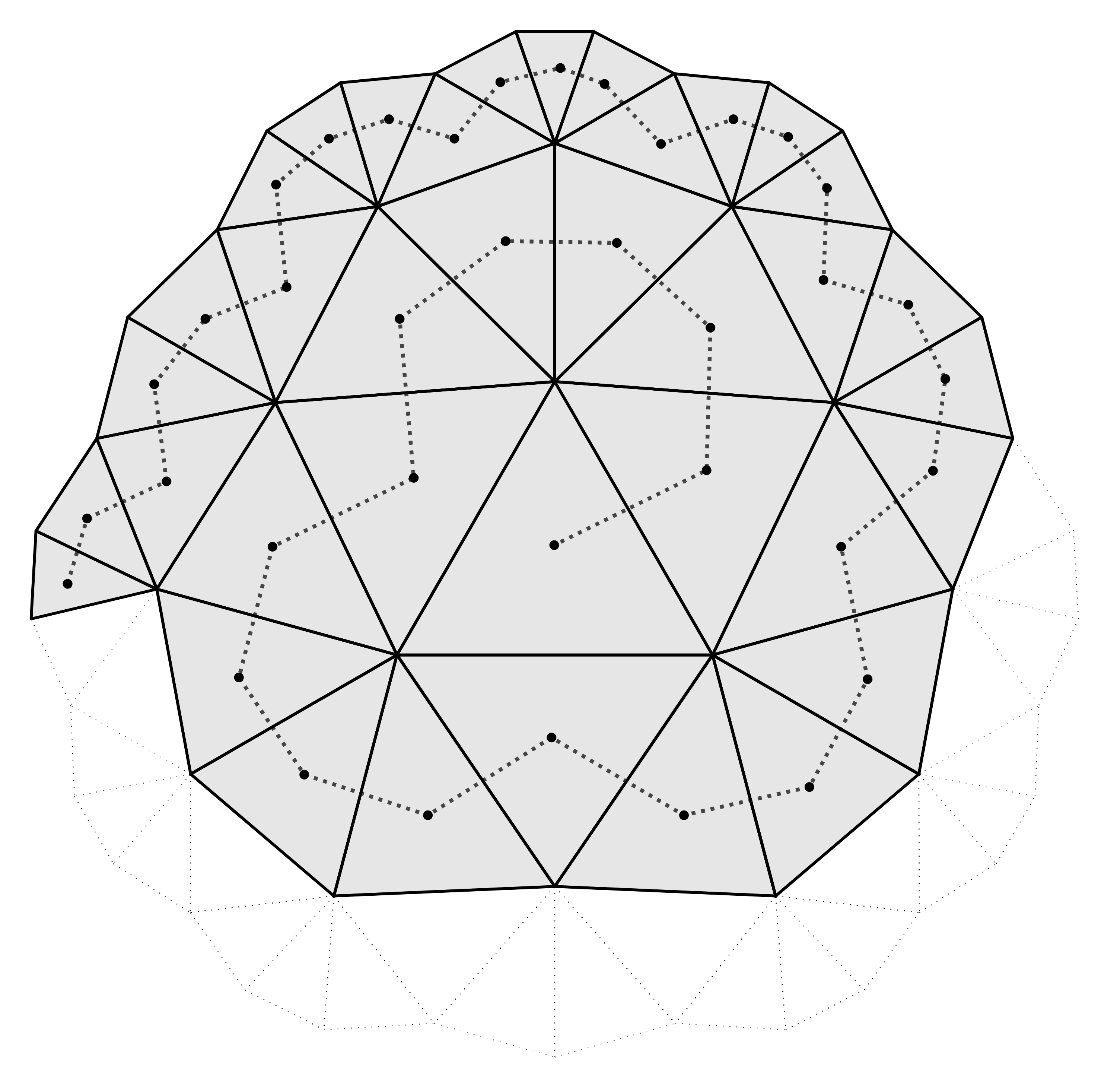}
    \includegraphics[scale=2.5]{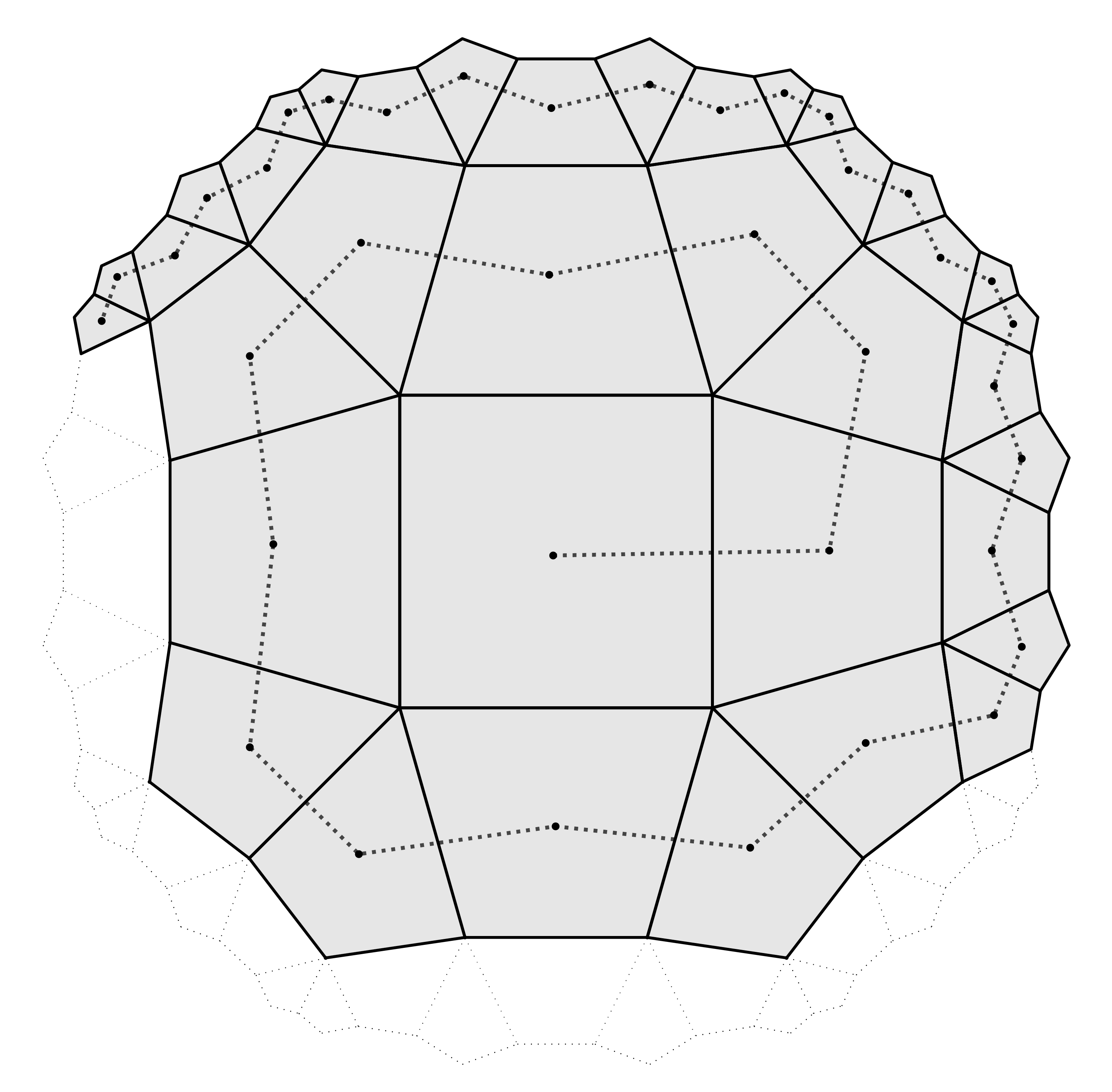}
    \includegraphics[scale=2.5]{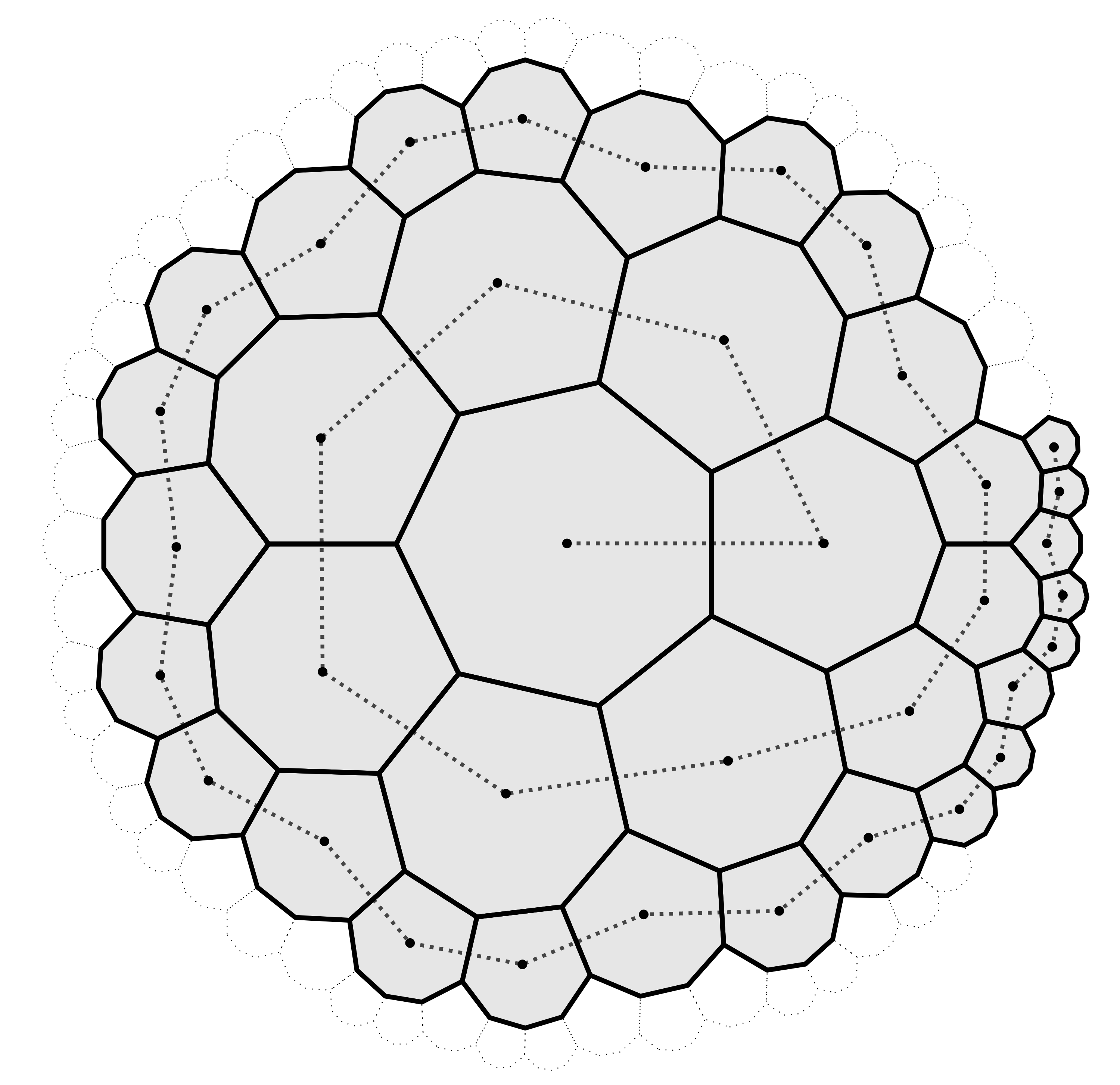}
    \caption{Examples of extremal hyperbolic animals \cite{MaRoTo}. Here we show spiral animals with 37 tiles for the hyperbolic cases $\{3,7\}, \{4,5\}$, and  $\{7,3\}$.}  
    \label{fig:spirals}
\end{figure}

\subsection{Main Results}\label{sec:mainresults}

For the rest of the paper, unless otherwise stated, we assume that $\{p,q\}$ corresponds to an hyperbolic tessellation, that is, $(p-2)(q-2)>4$. 

Let $\alpha$ be the solution of the quadratic equation $\alpha^2 +2\alpha + 1 = (p-2)(q-2)\alpha$ that fulfils $\alpha>1$.
The constants $\alpha$ and $\beta$ are related by 
    \begin{align}
    \beta = \frac{\alpha+1}{p-2}=\frac{q-2}{\alpha^{-1}+1}.       
    \end{align}

We define the sequences $\{n_k\}$ and $\{l_k\}$, for $k\geq 1$, as 
\begin{align}
        n_k &= 1 + \frac{p(q-2)}{\alpha-\alpha^{-1}} \left( \frac{\alpha^k-1}{\alpha-1} - \frac{\alpha^{-k}-1}{\alpha^{-1}-1}  \right), 
        \quad \text{and} \quad
        l_k=\frac{n_{k+1}-n_k}{p}.
        \label{eqn:n_k } 
    \end{align}

Now we have all the ingredients to state our main result.

    \begin{thm}    \label{thm:P_min}
 Let $(p-2)(q-2)>4$, $n>p(q-2)$, $k$ such that $n_k < n \le n_{k+1}$, \[ n' = n-n_k - \left \lfloor \frac{n-n_k-1 }{l_k} \right \rfloor \cdot l_k, \] and     \begin{align} \label{eqn:phi}
    \phi_k =  \begin{cases}
    \alpha^{-k}(\alpha-1)   & \text{if $p=3$ and } n'\leq \frac{l_k+l_{k+1} }{q-2} , \text{ or if $p=4$},\\    
    \alpha^{-k}(\alpha(\alpha-1)+1) ,   & \text{if $p=3$ and }  \frac{l_k+l_{k+1} }{q-2} < n' \leq l_k  , \\    
    \alpha^{-k}(\alpha-\beta)   & \text{if $p>4$}.  
  \end{cases}
    \end{align}
     If we define
        \begin{equation*}
        \epsilon_{p,q}(n)  = \frac{2n}{\beta} + 2
        \left \lfloor 1+\frac{1}{\beta} +
        \frac{p}{\alpha-1} +\frac{p\alpha^{-k}}{\alpha^{-1}-1} + \frac{\phi_k}{\beta} + \alpha^{-k} \left \lfloor \frac{n-n_k-1}{l_k} \right \rfloor  - \frac{n}{\beta} 
        \right \rfloor
        - 2   \left \lfloor \frac{n-n_k}{n_{k+1}-n_k}  \right \rfloor,
        \end{equation*}
        then
            \begin{align} \label{eqn:P_min2}
            \Pmin^{p,q}(n) = \left( p-2-\frac{2}{\beta}\right) n +  \epsilon_{p,q}(n). 
            \end{align}
\end{thm}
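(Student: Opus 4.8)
The plan is to transfer the problem of computing $\Pmin^{p,q}(n)$ to a problem about the combinatorics of discrete balls in the $\{p,q\}$-tessellation graph, and then to extract a closed formula for the relevant ball parameters using Sturmian words. The starting point is the recursive description of extremal spiral animals from \cite{MaRoTo}: an extremal animal with $n$ tiles is obtained by filling a discrete ball (the union of the first $k$ concentric "shells" around a base tile) and then attaching the remaining $n - n_k$ tiles along the outer shell in the spiral order. Thus I would first record the identity relating the perimeter to two quantities: the number of tiles and the length of the boundary path, which for a spiral animal reduces to bookkeeping the number of "exposed" edges contributed by the full ball plus those contributed by the partial last shell. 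Concretely, if $a_k$ denotes the number of tiles in the $k$-th shell and $b_k$ the boundary length after completing $k$ shells, the recursion from \cite{MaRoTo} gives linear recurrences for $a_k, b_k$ with characteristic roots $\alpha, \alpha^{-1}$, which is exactly why $n_k$ and $l_k$ in \eqref{eqn:n_k } have their stated closed form.

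The next step is to solve these recurrences explicitly. I would write $a_k, b_k$ as $A\alpha^k + B\alpha^{-k} + C$ for constants determined by $p, q$ and the initial conditions (distinguishing the cases $p=3$, $p=4$, $p>4$, which have slightly different seed shells — this is the origin of the three-case split in \eqref{eqn:phi}). Summing the geometric series yields $n_k = 1 + \sum_{j\le k} a_j$ in the form \eqref{eqn:n_k }, and similarly for $b_k$. Then for general $n$ with $n_k < n \le n_{k+1}$, the extremal animal has $\lfloor (n-n_k-1)/l_k\rfloor$ complete "petals" of the $(k+1)$-th shell plus a partial petal with $n'$ tiles, and the perimeter is $b_k$ adjusted by a term linear in the number of complete petals (coefficient $\alpha^{-k}$ up to scaling, reflecting how much boundary each petal removes) plus a correction $\phi_k/\beta$ for the partial petal. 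Collecting all terms and using $\beta = (\alpha+1)/(p-2)$ to rewrite everything over a common denominator produces the leading term $(p-2-2/\beta)n$ plus the bracketed expression; the floor and the subtracted $2\lfloor (n-n_k)/(n_{k+1}-n_k)\rfloor$ appear because the perimeter is an integer while the closed-form expression is not, and because the boundary between the $k$ and $k{+}1$ regimes must be handled consistently (the spiral closes up exactly at $n = n_{k+1}$).

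The main obstacle will be proving that the spiral construction is in fact optimal for every $n$, not merely for the $n_k$ — i.e., that the perimeter formula derived from spirals is a genuine lower bound for all $\{p,q\}$-animals. For the values $n_k$ this is in \cite{MaRoTo}, but to get a \emph{closed} formula for all $n$ one needs the monotonicity/convexity structure of $\Pmin^{p,q}$ between consecutive $n_k$, together with a lower bound argument: I would use an isoperimetric inequality for hyperbolic animals (perimeter bounded below in terms of $n$ and the number of tiles in each shell of an optimally "round" configuration) and match it against the spiral value. This is where Sturmian words enter: the sequence of increments $l_{k+1}-l_k$, and more importantly the fine structure of how $\phi_k$ and the floor terms behave as $k$ grows, is governed by the continued-fraction expansion of $\log\alpha$-type quantities, so the error term $\epsilon_{p,q}(n)$ is uniformly bounded precisely because a Sturmian (balanced) word has bounded discrepancy. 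I would isolate this as a lemma — the balancedness of the relevant cutting sequence — and then the uniform bound on $\epsilon_{p,q}(n)$ claimed after \eqref{eqn:P_min} follows. The remaining steps (verifying the three cases of $\phi_k$ by direct computation on the seed shells, and checking the small-$n$ regime $n \le p(q-2)$ separately) are routine.
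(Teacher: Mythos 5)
Your plan captures the outer skeleton (the spiral animals $A_{p,q}(k)$, the recurrences with characteristic roots $\alpha,\alpha^{-1}$ behind $n_k$ and $l_k$, the case split by $p$), but it misses the central mechanism and proposes a detour that is both unnecessary and much harder than anything required here. The optimality of the spiral for \emph{every} $n$, not just $n=n_k$, is already established in \cite{MaRoTo} and is simply cited in the form
\begin{equation*}
\Pmin^{p,q}(n) = (p-2)(n-n_k) + P_k - 2m,
\end{equation*}
where $m=m(n)$ is the number of perimeter vertices of $A_{p,q}(k)$ that become fully surrounded after attaching $n-n_k$ tiles of the next layer (each such event saves $2$ units of perimeter relative to the generic increment $p-2$). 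You should not plan to re-derive a lower bound via a fresh isoperimetric inequality; that is not the content of this theorem, and together with Lemma \ref{lem:P_k-n_k} (an algebraic identity for $P_k-(p-2)n_k$) the displayed identity reduces everything to computing $m(n)$ in closed form.

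That closed-form inversion is the actual work, and it is where Sturmian words enter --- not, as you suggest, through ``bounded discrepancy'' controlling $\epsilon_{p,q}(n)$ or continued fractions of $\log\alpha$-type quantities. The defining condition is $\sum_{i=1}^{m}(q-d_k(i)) \le n-n_k-1 < \sum_{i=1}^{m+1}(q-d_k(i))$, where $d_k(i)$ is the degree of the $i$-th perimeter vertex. The paper proves (Theorem \ref{thm:formula-W}) that the limiting degree word satisfies $W(i)=q-\lfloor i\beta\rfloor+\lfloor(i-1)\beta\rfloor$, i.e., it is, up to shift, the Sturmian word of the algebraic number $\beta$ itself; hence the partial sums $\sum(q-d_k(i))$ telescope to differences of $\lfloor\cdot\,\beta\rfloor$, the inequality for $m$ inverts to $m=\lfloor(n-n_k-\{\delta w_k\beta\})/\beta\rfloor$ on each of the $p$ identical blocks of $d_k$, and $\phi_k$ is precisely the Beatty-sequence offset $\{\delta w_k\beta\}$ of the block (with the extra $p=3$ subcase coming from $d_k=(U_kU_{k-1})^3$), not a ``correction for the partial petal.'' Without this identification the bracketed floor, the $\alpha^{-k}\lfloor(n-n_k-1)/l_k\rfloor$ term, and the three cases of $\phi_k$ cannot be derived; and the uniform boundedness of $\epsilon_{p,q}(n)$ is a separate, later estimate (Theorem \ref{thm:bounds}) that plays no role in proving the formula.
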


Using the definition of $n_k$, the condition $n_k < n \le n_{k+1}$ is equivalent to 
\begin{align}
    k=\left \lfloor \log_{\alpha} \left( \frac{\Delta + \sqrt{\Delta^2-4\alpha}}{2} \right) \right \rfloor, \quad \text{where} \quad   \Delta=\alpha+1+\frac{(n-2) (\alpha-1)(\alpha-\alpha^{-1})}{p(q-2)}. \label{eqn:k}        
    \end{align}
This gives a formula for computing $k$ for any $n\geq 1$. Thus, equation (\ref{eqn:P_min2}) is a closed formula for $\Pmin^{p,q}(n)$ valid for $n>p(q-2)$. For $n\leq p (q-2)$ the values of $\Pmin^{p,q}(n)$  can be obtained by following the spiral construction of extremal animals defined in \cite{MaRoTo}, and analyzing the increment (of either $p-2$ or $p-4$) when a new tile is attached. This results in the following formulas
    \begin{align*}
        \Pmin^{p,q}(n) = 
        \begin{cases}
              p+(p-2)(n-1), & \text{ if } 1\le n < q, \\
              p + (p-2)\left(n-1- \left \lfloor \frac{n-2}{q-2}\right \rfloor \right) + (p-4) \left \lfloor \frac{n-2}{q-2}\right \rfloor, & \text{ if } q\le n < p(q-2) , \\
              p(p-3) + (q-3)(p-2)p, & \text{ if } n = p(q-2). \\
        \end{cases}
    \end{align*}

It is not clear from Theorem \ref{thm:P_min} what is the asymptotic behaviour of $\Pmin^{p,q}(n)$. To accomplish this, we prove in the following Theorem that $\epsilon_{p,q}(n)$ is uniformly bounded by a constant.

\begin{thm} \label{thm:bounds}
Let $n>p(q-2)$ and $\epsilon_{p,q}(n)$ be defined as above. Then, $0< \epsilon_{p,q}(n)<22 $. Moreover, for $n>n_4$,
    \begin{align}   \label{eqn:bound2}
      \left| \epsilon_{p,q}(n) - 2\left(1+\frac{1}{\beta} + \frac{p}{\alpha-1} \right)  \right| < 2.6.  
    \end{align}
      
\end{thm}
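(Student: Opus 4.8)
The expression for $\epsilon_{p,q}(n)$ has the form $\frac{2n}{\beta} + 2\lfloor X \rfloor - 2 - 2\lfloor Y\rfloor$ where $Y = \lfloor \frac{n-n_k}{n_{k+1}-n_k}\rfloor \in\{0,1\}$ and
\[
X = 1 + \frac{1}{\beta} + \frac{p}{\alpha-1} + \frac{p\alpha^{-k}}{\alpha^{-1}-1} + \frac{\phi_k}{\beta} + \alpha^{-k}\Big\lfloor \tfrac{n-n_k-1}{l_k}\Big\rfloor - \frac{n}{\beta}.
\]
So the plan is: write $\epsilon_{p,q}(n) = \frac{2n}{\beta} - 2\lfloor \tfrac{n}{\beta}\rfloor \;+\; 2\lfloor X\rfloor - 2\lfloor X + \tfrac{n}{\beta}\rfloor\;+\;\text{(clean terms)}$ is not quite the right regrouping; instead I would set $Z = X + \frac{n}{\beta} = 1 + \frac1\beta + \frac{p}{\alpha-1} + \frac{p\alpha^{-k}}{\alpha^{-1}-1} + \frac{\phi_k}{\beta} + \alpha^{-k}\lfloor\frac{n-n_k-1}{l_k}\rfloor$, which is manifestly positive and free of the fractional part of $n/\beta$. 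Then $\epsilon_{p,q}(n) = \frac{2n}{\beta} + 2\lfloor Z - \frac n\beta\rfloor - 2 - 2Y$. Using $\lfloor Z - \frac n\beta\rfloor = Z - \frac n\beta - \{Z - \frac n\beta\}$ we get $\epsilon_{p,q}(n) = 2Z - 2 - 2Y - 2\{Z-\frac n\beta\}$. The key point is now to bound $Z$ itself: the quantity $2Z - 2 - 2Y$ is, up to the bounded fractional correction $2\{Z-\frac n\beta\}\in[0,2)$, equal to $\epsilon_{p,q}(n)$, and it involves only $\alpha^{-k}$, $\phi_k$ and $\alpha^{-k}\lfloor\frac{n-n_k-1}{l_k}\rfloor$, all of which are controlled.

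\textbf{Key steps.} First I would establish elementary bounds on the ingredients using the defining relations $\alpha^2+2\alpha+1=(p-2)(q-2)\alpha$, $\beta=\frac{\alpha+1}{p-2}$: namely $\alpha>1$, $\beta>1$, and uniform bounds such as $\frac1\beta < 1$, $\frac{p}{\alpha-1}$ is bounded (since $(p-2)(q-2)>4$ forces $\alpha$ away from $1$ in a way that scales with $p$ — this needs care when $q=3$ and $p$ large, where $\alpha-1 \sim c/p$, so $\frac p{\alpha-1}$ stays bounded). Second, I would bound $\phi_k$: in all three cases of \eqref{eqn:phi}, $0 < \phi_k \le \alpha^{-k}\cdot(\alpha^2)$ roughly, and since $k\ge 1$ we have $\alpha^{-k}\le\alpha^{-1}<1$, giving $\frac{\phi_k}{\beta}$ bounded. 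Third — the crucial estimate — I would bound $\alpha^{-k}\lfloor\frac{n-n_k-1}{l_k}\rfloor$: since $n\le n_{k+1}$, we have $\frac{n-n_k-1}{l_k} < \frac{n_{k+1}-n_k}{l_k} = p$, so the floor is at most $p-1$, and then $\alpha^{-k}(p-1)$ must be shown bounded. This again requires the relation between $p$, $\alpha$ and $k$: when $p$ is large, $\alpha-1$ is small, so $n_1 = 1 + \frac{p(q-2)}{\alpha-\alpha^{-1}}(\cdots)$ and the growth factor $\alpha$ interact; one shows $\alpha^{-1}p$ is bounded using $q=3 \Rightarrow \alpha + \alpha^{-1} + 2 = p-2 \Rightarrow \alpha \ge p-5$, hence $\alpha^{-1}p \le \frac{p}{p-5}$, bounded for $p$ large and handled case-by-case for small $p$. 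Fourth, combine: $2Z$ is a sum of uniformly bounded nonnegative terms, yielding the crude bound $0<\epsilon_{p,q}(n)<22$ after also checking $\epsilon_{p,q}(n)>0$ (here I would use that $\Pmin$ is an integer and the main formula, or directly that $2\lfloor Z-\frac n\beta\rfloor > 2(Z - \frac n\beta) - 2 \ge \ldots$, pinning the sign by the geometric meaning that $\Pmin^{p,q}(n) > (p-2-\frac2\beta)n$). For \eqref{eqn:bound2}, note $\epsilon_{p,q}(n) - 2(1+\frac1\beta+\frac p{\alpha-1}) = 2\big(\frac{p\alpha^{-k}}{\alpha^{-1}-1} + \frac{\phi_k}{\beta} + \alpha^{-k}\lfloor\frac{n-n_k-1}{l_k}\rfloor\big) - 2 - 2Y - 2\{Z-\frac n\beta\}$, and for $n>n_4$ we have $k\ge 4$ so $\alpha^{-k}$ is small enough that $\frac{p\alpha^{-k}}{\alpha^{-1}-1}$ and $\alpha^{-k}(p-1)$ are each $<0.3$ or so; plugging in the numeric bounds gives the $2.6$ bound.

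\textbf{Main obstacle.} The hard part is the uniformity in $p$ and $q$ simultaneously, specifically the regime $q=3$ (or $p=3$) with the other parameter large, where $\alpha$ either blows up or tends to a small multiple — in that regime several individual terms like $\frac p{\alpha-1}$, $\alpha^{-k}p$, $\frac{p\alpha^{-k}}{\alpha^{-1}-1}$ look potentially unbounded and one must exploit cancellations and the exact relation $(p-2)(q-2)=\alpha+2+\alpha^{-1}$ to see they are not. I expect to split into the cases $p=3$, $p=4$, $p\ge5$ (mirroring \eqref{eqn:phi}), and within each, treat ``$\alpha$ bounded'' versus ``$\alpha$ large'' separately, using in the large-$\alpha$ case the asymptotics $\alpha \approx (p-2)(q-2)$, $\beta\approx q-2$, $l_k \approx \frac{q-2}{p}\alpha^k$, so that $\alpha^{-k}\lfloor\frac{n-n_k-1}{l_k}\rfloor \approx \frac{p}{q-2}\cdot\frac{n-n_k}{\alpha^{2k}}$, which is small. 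Getting the explicit constants $22$ and $2.6$ will then be a matter of carefully adding up worst-case contributions; I would not expect the constants to be tight.
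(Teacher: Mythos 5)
Your plan is essentially the paper's proof: strip the floor functions at the cost of an $O(1)$ correction, observe that the $n/\beta$ terms cancel, and bound each remaining ingredient ($1/\beta$, $p/(\alpha-1)$, $\phi_k/\beta$, $p\alpha^{-k}$, $\alpha^{-k}\lfloor (n-n_k-1)/l_k\rfloor$) uniformly in $p,q$ by monotonicity arguments reduced to the extremal cases $\{3,7\},\{4,5\},\{5,4\},\{7,3\}$, then use $k\ge 4$ for the refined bound; the paper packages the ingredient bounds as Lemma \ref{lem:bounds} and uses the exact identity of Lemma \ref{lem:P_k-n_k} at $n=n_{k+1}$, but the decomposition is the same. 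One concrete slip to fix: your claimed bounds $\beta>1$ and $1/\beta<1$ are false precisely in the $q=3$ regime you flag as delicate (e.g.\ for $\{7,3\}$ one has $\beta=\frac{q-2}{\alpha^{-1}+1}<1$, and the correct uniform bound is $1/\beta<1.4$); also note that $n>p(q-2)$ gives $k\ge 2$, and you need $\alpha^{-2}p$ rather than $\alpha^{-1}p$ to land inside the constant $22$. Neither issue changes the architecture, only the bookkeeping of the explicit constants.
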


\begin{cor}
Let $(p-2)(q-2)>4$, then
\begin{align} 
            \Pmin^{p,q}(n) = 
                            \left( p-2-\frac{2}{\beta}\right) n +  o(1) . 
            \end{align}

\end{cor}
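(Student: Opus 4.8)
The plan is to derive the Corollary as an immediate consequence of Theorem~\ref{thm:P_min} together with the bound in Theorem~\ref{thm:bounds}. The key observation is that the $o(1)$ in the statement cannot literally mean a term vanishing as $n\to\infty$ with $p,q$ fixed: the formula \eqref{eqn:P_min2} has $\epsilon_{p,q}(n)$ bounded but not vanishing. So the first step is to read the statement correctly: the asymptotic is in the regime where $(p-2)(q-2)\to\infty$, equivalently $\beta\to\infty$. Under that interpretation I would argue as follows. Write, using Theorem~\ref{thm:P_min},
\[
\Pmin^{p,q}(n) - (p-2)n = -\frac{2n}{\beta} + \epsilon_{p,q}(n),
\]
and then substitute the explicit definition of $\epsilon_{p,q}(n)$ from Theorem~\ref{thm:P_min}, whose leading term is exactly $+\tfrac{2n}{\beta}$; these two $\tfrac{2n}{\beta}$ terms cancel, leaving $\Pmin^{p,q}(n) - (p-2)n$ equal to $2\lfloor\,\cdots\,\rfloor - 2\lfloor\,\cdots\,\rfloor$, a bounded quantity. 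Hence the whole "error" is $\epsilon_{p,q}(n) - 2n/\beta = O(1)$, and one only needs to show this $O(1)$ quantity is in fact $o(1)$ as $\beta\to\infty$.

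The second step is to estimate each piece of $\epsilon_{p,q}(n) - 2n/\beta = 2\lfloor X \rfloor - 2\lfloor Y\rfloor$ where $X = 1+\tfrac1\beta + \tfrac{p}{\alpha-1} + \tfrac{p\alpha^{-k}}{\alpha^{-1}-1} + \tfrac{\phi_k}{\beta} + \alpha^{-k}\lfloor\tfrac{n-n_k-1}{l_k}\rfloor - \tfrac{n}{\beta}$ and $Y = \tfrac{n-n_k}{n_{k+1}-n_k}$. I would show that as $(p-2)(q-2)\to\infty$ both $\alpha$ and $\beta$ tend to infinity (from $\alpha^2+2\alpha+1 = (p-2)(q-2)\alpha$ one gets $\alpha \sim (p-2)(q-2)$), so that $\tfrac{p}{\alpha-1}\to 0$, $\alpha^{-k}\to 0$ for every fixed $k\ge 1$, $\tfrac1\beta\to 0$, and $\tfrac{\phi_k}{\beta}\to 0$ since $\phi_k$ is itself $O(\alpha^{1-k})$ over $\beta = \Theta(\alpha/(p-2))$. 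The one term needing care is $\tfrac{n}{\beta}$ versus the implicit cancellation: here I would use that for $n$ in the window $n_k<n\le n_{k+1}$, the quantity $n/\beta$ stays controlled relative to the geometric-series expressions $\tfrac{p}{\alpha-1}+\tfrac{p\alpha^{-k}}{\alpha^{-1}-1}$ coming from the definition \eqref{eqn:n_k } of $n_k$; in fact Theorem~\ref{thm:bounds} already packages exactly this, giving $|\epsilon_{p,q}(n) - 2(1+\tfrac1\beta+\tfrac{p}{\alpha-1})| < 2.6$ for $n>n_4$, so I can simply invoke it.

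The third step is then purely bookkeeping: from Theorem~\ref{thm:bounds}, for $n > n_4$ we have $\epsilon_{p,q}(n) = 2 + \delta_{p,q}(n)$ with $|\delta_{p,q}(n)| < 0.6 + 2/\beta + 2p/(\alpha-1) \to 0$; combined with $2n/\beta - 2n/\beta = 0$ from the cancellation, the residual $\Pmin^{p,q}(n) - (p-2)n$ — once the genuinely vanishing terms are separated from the integer-valued bounded terms — is $o(1)$ in the $\beta\to\infty$ limit, uniformly in $n>n_4$ (and the finitely many $n\le n_4$ are handled trivially since there are boundedly many of them and each term is individually $o(1)$). I would close by remarking that the statement should be read as: the coefficient $p-2-2/\beta$ captures the linear growth to within an $o(1)$ additive correction whose size shrinks as the tessellation becomes "more hyperbolic."

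The main obstacle I anticipate is not any single estimate but rather pinning down the precise meaning of $o(1)$ and making the limiting regime unambiguous — whether it is $\beta\to\infty$, or $n\to\infty$ with a normalization, or a statement about $\Pmin^{p,q}(n)/n \to p-2-2/\beta$. Once the regime is fixed, every term in $\epsilon_{p,q}(n) - 2n/\beta$ is handled by the elementary asymptotics $\alpha,\beta\to\infty$ together with the already-established uniform bound of Theorem~\ref{thm:bounds}, so no new hard analysis is required; the proof is essentially an unwinding of definitions plus a citation of the preceding theorem.
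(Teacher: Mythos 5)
You correctly spotted the central difficulty: Theorem \ref{thm:bounds} shows $\epsilon_{p,q}(n)$ is bounded but \emph{not} vanishing, so the ``$o(1)$'' cannot mean a term tending to $0$ as $n\to\infty$ with $p,q$ fixed. But your proposed rescue --- reading the limit as $(p-2)(q-2)\to\infty$ --- does not work either, and the final bookkeeping step is where it breaks. Theorem \ref{thm:bounds} centres $\epsilon_{p,q}(n)$ at $2\left(1+\frac{1}{\beta}+\frac{p}{\alpha-1}\right)$, which tends to $2$, not to $0$; the $\pm 2.6$ window comes from floor functions and never shrinks in any limit; and your inequality $|\delta_{p,q}(n)|<0.6+\frac{2}{\beta}+\frac{2p}{\alpha-1}\to 0$ is wrong twice over (the theorem's constant is $2.6$, not $0.6$, and the right-hand side tends to a positive constant, not to $0$). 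Moreover $\frac{p}{\alpha-1}$ does not vanish as $(p-2)(q-2)\to\infty$ unless $q\to\infty$: for $q=3$ one has $\alpha\approx p-4$, so $\frac{p}{\alpha-1}\to 1$. Most decisively, the paper's own proof of Theorem \ref{thm:bounds} shows $\epsilon_{p,q}(n_{k+1})\ge 2\left(1+\frac{1}{\beta}\right)>2$ for every $p,q$ and every $k$, so $\epsilon_{p,q}(n)$ is bounded below by $2$ along a subsequence regardless of the regime; no reinterpretation of the limit can make the additive error literally $o(1)$.

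The intended reading is much more modest, and the corollary is then immediate with no new argument: the error term should be understood as $O(1)$ (equivalently, as a \emph{relative} error, $\Pmin^{p,q}(n)=\left(p-2-\frac{2}{\beta}\right)n\,(1+o(1))$, i.e.\ $\Pmin^{p,q}(n)/n\to p-2-\frac{2}{\beta}$). That follows in one line from Equation (\ref{eqn:P_min2}) together with the uniform bound $0<\epsilon_{p,q}(n)<22$ of Theorem \ref{thm:bounds}, which is exactly why the paper states the corollary without proof. Your first step (the cancellation of the two $\frac{2n}{\beta}$ terms, leaving a bounded quantity) is correct and is all that is needed; the subsequent attempt to squeeze that bounded quantity down to $o(1)$ should be dropped.
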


The rest of the paper is structured as follows: in Section \ref{section:Sturmian}, we describe how Sturmian words arise from the study of the degrees (i.e., the number of edges adjacent to a vertex) of the perimeter vertices of a certain sequence of extremal hyperbolic animals. In the same section, in Theorem \ref{thm:formula-W}, we give a closed formula for the values of these degrees. In Section \ref{section:P_min}, we use this formula to prove Theorem \ref{thm:P_min} and Theorem \ref{thm:bounds}. 
For clarity of exposition, we have postponed the proof of Theorem \ref{thm:P_min} and some of the painful algebraic computations needed along the paper to Sections  \ref{section:formula-W} and \ref{section:Pain}, respectively.

\section{Extremal hyperbolic animals and Sturmian words} \label{section:Sturmian}

We use the following construction of sequences of extremal hyperbolic animals introduced in \cite{MaRoTo}.

\begin{defn} \label{defn:A_p,q}
Denote by $A_{p,q}(1)$ the animal with only one $p$-gon. Then, construct $A_{p,q}(k)$ from $A_{p,q}(k-1)$ by adding precisely the tiles needed so that all the perimeter vertices of $A_{p,q}(k-1)$ get surrounded by $q$ tiles. We call $A_{p,q}(k)$ the \emph{complete $k$-layered $\{p,q\}$-animal}. Let $d_k$ be the word obtained by concatenating the degrees of the perimeter vertices of $A_{p,q}(k)$. Denote by $n_k$ and $P_k$ the number of tiles and the perimeter of $A_{p,q}(k)$, respectively.\footnote{In Theorem 3.1 of \cite{MaRoTo} it was proven that $n_k$ satisfies Equation (\ref{eqn:n_k }).}
\end{defn}


In what follows we use the abbreviation $2^a$ to describe a string of $a$ consecutive 2’s. The expressions $3^a$, $4^a$, etc., are interpreted similarly. If $a= 0$ then we have an empty block.

On the one hand, in \cite{MaRoTo}, the words $d_{k}$ were computed recursively in the following manner: $d_{1}$ is the word $2^p$; then, $d_{k}$ is constructed from $d_{k-1}$ by replacing each element of $d_{k-1}$ with a string according to the rules contained in Table \ref{table:d_k}. Figure \ref{fig:A3-recursion} depicts the geometric reasoning behind these substitution rules. 

\begin{table}
\caption{Table showing substitution rules for words $d_k$.}
\label{table:d_k}
    \centering
\begin{tabular} {  c|c|c|c}
Cases & $p=3$ and $q\ge 7$ & $p\ge 4$and $q \ge 4$  & $p \ge 7$ and $q=3$ \\
 \hline
 \hline
 
 &   $2 \to 4 3^{q-4}$  &  $2 \to 3 2^{p-4}\left( 3 2^{p-3} \right)^{q-3}$ &  $2 \to 3 2^{p-4}$     \\
Substitution &  $3 \to 43^{q-5}$          
	& $3  \to 32^{p-4}\left( 3 2^{p-3} \right)^{q-4}$      
	&  $32 \to  3 2^{p-5}$        \\
&  $4\to 43^{q-6}$         
    & &      \\
    \hline
\end{tabular}
\end{table}

\begin{figure}[h]  
    \centering
   \includegraphics[scale=4.5]{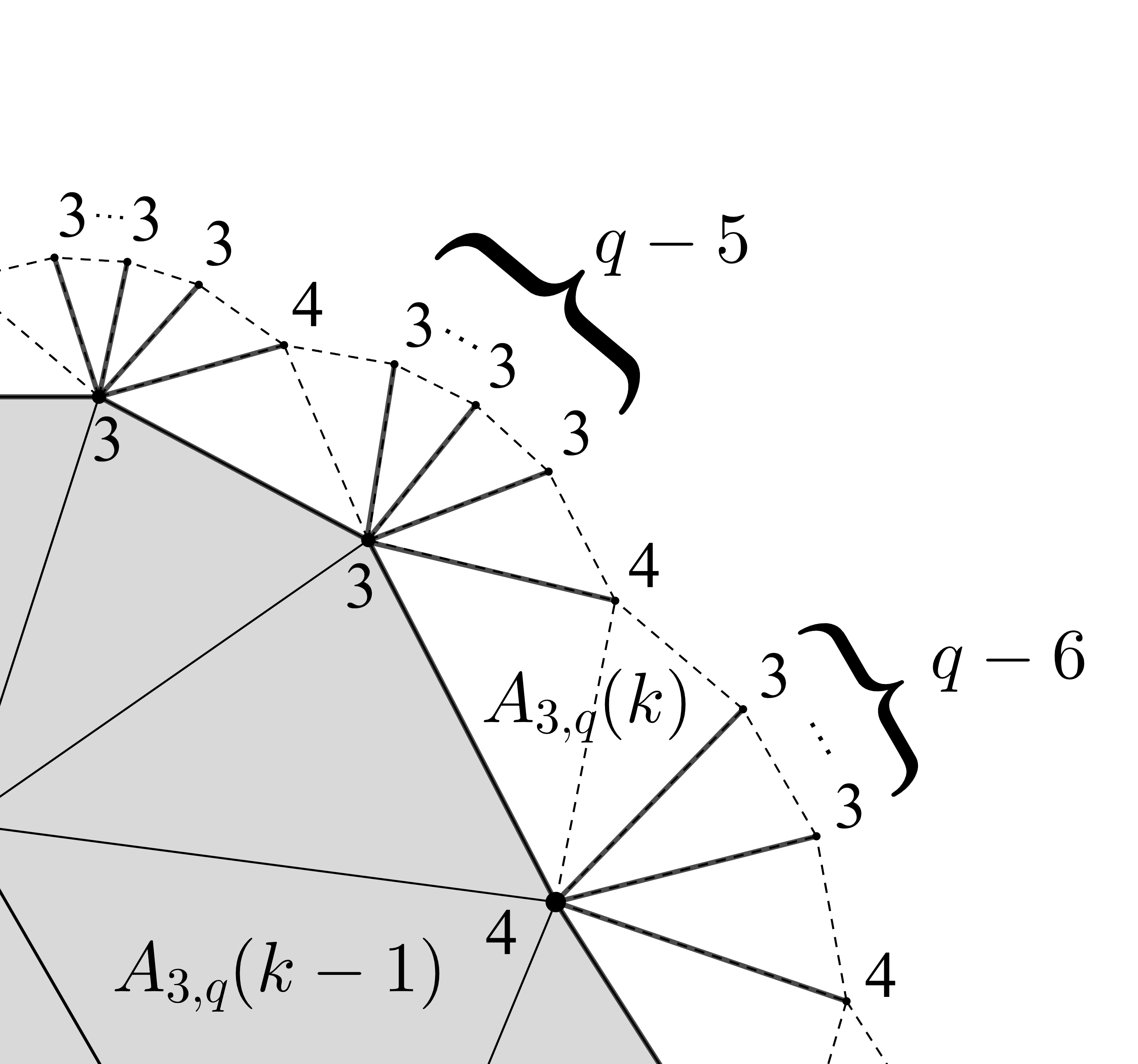}
   \includegraphics[scale=0.045]{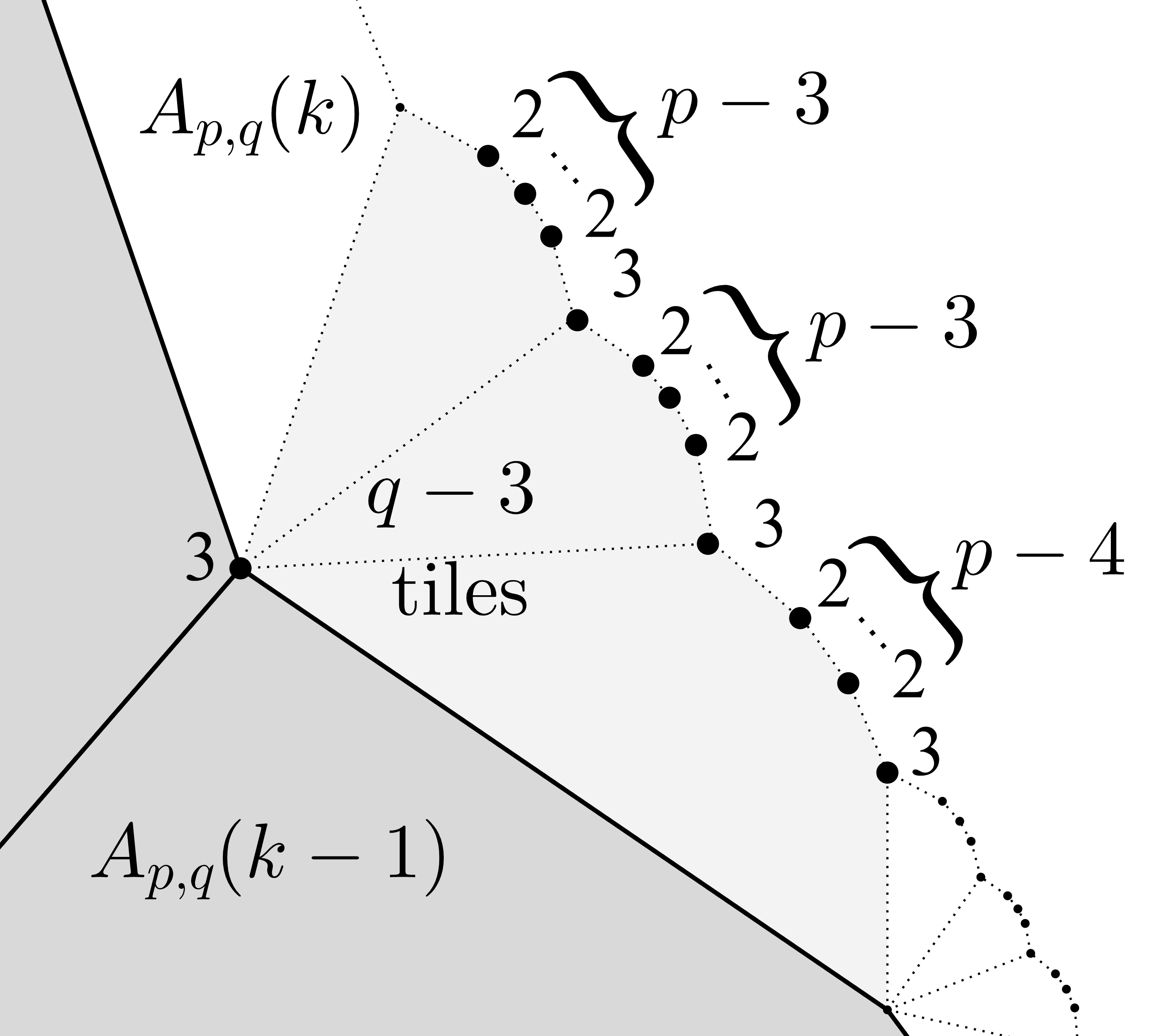}
\includegraphics[scale=6]{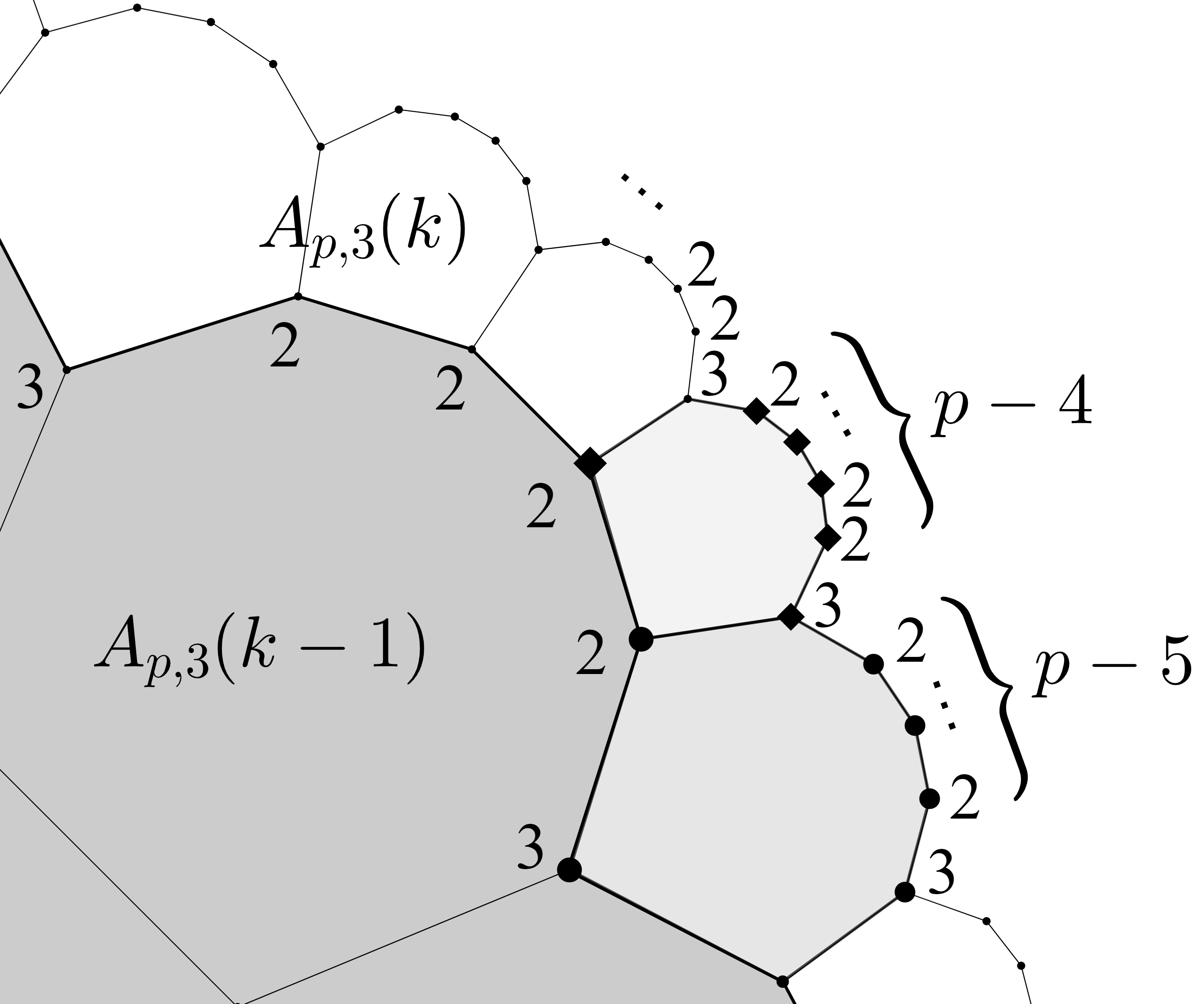}
       \caption{\textit{Left:} When $p=3$, each vertex of degree $d=3$ or $4$ in $A_{p,q}(k-1)$ contributes to a vertex of degree 4 in $A_{p,q}(k)$ and $q-d-2$ vertices of degree equal to $3$. \textit{Center:} When $p\geq4$, each vertex of degree equal to $3$ in $A_{p,q}(k-1)$ contributes to $q-3$ vertices of degree $3$ in $A_{p,q}(k)$ and  $(q-3)(p-3)-1$ vertices of degree $2$. \textit{Right:} When $q=3$, each vertex of degree $3$ in $A_{p,q}(k-1)$ is paired up with a vertex of degree $2$ they together contribute to a vertex of degree $3$ and $p-5$ vertices of degree $2$ in $A_{p,q}(k)$. The other vertices of degree $2$ in $A_{p,q}(k-1)$ contribute to a vertex of degree $3$ and $p-4$ vertices of degree $2$ in $A_{p,q}(k)$.}  \label{fig:A3-recursion}
\end{figure}

On the other hand, the substitution rules of Table \ref{table:d_k}  can be rephrased if we define the word sequences $U_k$ and $W_k$ as in Table \ref{table:UW}. Then $d_k=(U_k)^p$ if $p>3$, and $k\ge 1$. If $p=3$, and $k\ge 2$, then $d_k=(U_k U_{k-1})^3$. \\ 

\begin{table}
\caption{Table showing equations for recurrences of words $U_k$ and $W_k$.}
\label{table:UW}
    \centering
\begin{tabular} {  c|c|c|c}
Cases & $p=3$ and $q\ge 7$ & $p\ge4$ and $q \ge 4$  & $p \ge 7$ and $q=3$ \\
 \hline
 \hline
	&   $U_1  =3,  \, W_1=4 $         
	&  $U_1=2, \,  W_1=3$ 
	&  $U_1 = 2, \, W_1 = 32  $  \\
	   Recurrence
	&  $ U_{k+1} = W_k U_k^{q-5}$         
	&  $U_{k+1} = W_k U_k^{p-4} \left( W_k U_k^{p-3} \right)^{q-3}$
	&  $U_{k+1} = W_k U_k^{p-5}$       \\
	
	&  	$W_{k+1} = W_k U_k^{q-6}$       
	&   $ W_{k+1} = W_k U_k^{p-4} \left( W_k U_k^{p-3} \right)^{q-4} $
	&  	$W_{k+1} = W_k U_k^{p-6} $\\
	\hline
  \end{tabular}
\end{table}

We will use the theory of Sturmian words to find closed formulas for the $i$-th element of $W_k$, $U_k$ and $d_k$. To do so, we relate the Sturmian word of $\beta$ with the limit word of the nested sequence of words $W_k$, that we represent by $W=\lim_{k\to \infty} W_k$.
\begin{defn} \label{defn:Sturmian}
 Let $\beta$ be as in (\ref{eqn:beta}), define the sequence
	$$B(i) = \lfloor (i+1)\beta \rfloor - \lfloor i \beta \rfloor - \lfloor \beta \rfloor$$
for $i\geq 1$. Let $B$ be the word obtained by concatenating the elements of the sequence $\{B(i)\}_{i\ge 1}$. $B$ is known as the  \textit{Sturmian word} of $\beta$.
\end{defn}

 
Shallit proved in 1991 \cite{shallit1991characteristic}, that $B$ can also be constructed in the following recursive way: let $[b_0; b_1, b_2, b_3, \dots]$ be the continued fraction expansion of $\beta$, then $B$ is equal to $\lim_{k\to \infty} B_{k}$, where, 
	\begin{align*}
	B_0&=0	, \\
	B_1&=0^{b_1-1}1	, \\
	B_{k}&=B_{k-1}^{b_k}B_{k-2}, n\geq 2.
	\end{align*}

Using these two characterizations of the Sturmian word of $\beta$, we prove in Section \ref{section:Pain} the following Theorem that gives a closed formula for the elements of $W$.

\begin{thm} \label{thm:formula-W}
Let $\beta$ be as in (\ref{eqn:beta}). The $i$-th element of  $W=\lim_{k\to \infty}W_k$ is equal to
    \begin{align}   \label{eqn:formula-W}
    q - \lfloor i \beta \rfloor + \lfloor (i-1) \beta \rfloor      .  
    \end{align}
\end{thm}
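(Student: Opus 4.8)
The plan is to connect the recursively-defined limit word $W=\lim_{k\to\infty}W_k$ with the Sturmian word $B$ of $\beta$, and then unwind Definition \ref{defn:Sturmian} into the closed form \eqref{eqn:formula-W}. The starting point is that the letters appearing in $W_k$ take only two values, namely $q-\lfloor\beta\rfloor$ (coming from the $U$-blocks, reflecting that $\beta$ governs the asymptotic block lengths) and $q-\lfloor\beta\rfloor+1$ (coming from the $W$-blocks); one should first verify from Table \ref{table:UW} that $\lfloor\beta\rfloor$ equals $q-4$ when $p=3$, $q-3$ when $p=4$, and so on — in each column the recurrence for $U_{k+1}$ ends in a power $U_k^{\,?}$ whose exponent is exactly $\lfloor\beta\rfloor$-ish, so the ``density'' of $W$-letters among $U$-letters is $1/\beta$. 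Concretely I would prove the \emph{dictionary}: if $B=\lim B_k$ is the Sturmian word of $\beta$ with $B(i)\in\{0,1\}$, then the $i$-th letter of $W$ is $q-\lfloor\beta\rfloor+B(i)$. Granting this dictionary, \eqref{eqn:formula-W} is immediate, since by Definition \ref{defn:Sturmian} $B(i)=\lfloor(i+1)\beta\rfloor-\lfloor i\beta\rfloor-\lfloor\beta\rfloor$, whence $q-\lfloor\beta\rfloor+B(i)=q-\lfloor i\beta\rfloor+\lfloor(i-1)\beta\rfloor$ after reindexing $i\mapsto i-1$ (and checking the base case $i=1$ separately, using that $W$ begins with a $W$-letter, i.e. $B(1)=1$, which holds because $b_1\ge 1$ forces $B_1$ to end in $1$... one must be a little careful about whether $W$ starts with $W_1$ or not).

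To establish the dictionary I would use Shallit's recursive characterization of $B$ rather than its arithmetic definition, because the $W_k,U_k$ recurrences in Table \ref{table:UW} are themselves of substitution type and should match a morphism generating $B$. The key algebraic input is the \emph{continued fraction expansion of $\beta$}: from \eqref{eqn:beta} and the relation $\beta=\frac{\alpha+1}{p-2}=\frac{q-2}{\alpha^{-1}+1}$, together with $\alpha^2+2\alpha+1=(p-2)(q-2)\alpha$, one shows that $\beta$ is a quadratic irrational whose continued fraction is eventually periodic, and I would compute $[b_0;b_1,b_2,\dots]$ explicitly in each of the three cases ($p=3$, $p=4$, $p>4$ with $q>3$, and $q=3$). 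Having the $b_j$'s, Shallit's rule $B_k=B_{k-1}^{b_k}B_{k-2}$ produces a nested sequence of words; I would show by induction on $k$ that, under the letter-substitution $0\mapsto U_?$, $1\mapsto W_?$ (with an appropriate index shift between the two nestings, since $W_{k+1}=W_kU_k^{\,\text{stuff}}$ packs several ``levels'' of the Sturmian recurrence into one level of the $W$-recurrence), the word $B_k$ maps onto $W_{m(k)}$ for a suitable increasing $m(k)$. This amounts to verifying that iterating the Sturmian morphism $b_j$ times reproduces exactly the exponents $p-4,p-3,q-3,q-4$ (resp. $q-5,q-6$, resp. $p-5,p-6$) that appear in Table \ref{table:UW} — a finite check once the $b_j$ are in hand.

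The main obstacle will be the bookkeeping of indices: the $W_k$-recurrence is \emph{not} literally $B_k=B_{k-1}^{b_k}B_{k-2}$ on the nose — one step of the $W$-recurrence corresponds to a fixed number (two or three, depending on the case) of steps of the Sturmian recurrence — so the inductive hypothesis has to be set up with the correct compression factor and the correct alignment of $U$ versus $W$ blocks, and the $p=3$ case is genuinely different because there $d_k=(U_kU_{k-1})^3$ rather than $(U_k)^p$, so $W$ must be read off from an interleaving of two consecutive levels. I would handle this by first proving a clean lemma of the form ``$W_k U_{k}^{\infty}$ and $B$ agree on a common prefix that grows to infinity'', reducing everything to a prefix-stability statement, and then pushing the finite case analysis into Section \ref{section:Pain} as promised in the text. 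A secondary, more mechanical difficulty is confirming the value of $\lfloor\beta\rfloor$ and the leading behaviour $b_0=\lfloor\beta\rfloor$, $b_1=\dots$ in each regime; this is exactly the kind of ``painful algebraic computation'' the introduction flags, and it is where I expect to spend the most ink, but it is not conceptually hard given the quadratic relation satisfied by $\beta$.
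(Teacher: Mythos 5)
Your overall architecture is exactly the paper's: Lemma \ref{lem:beta} computes the continued fraction of $\beta$ case by case, Shallit's recursion $B_k=B_{k-1}^{b_k}B_{k-2}$ (Theorem \ref{thm:Shallit}) converts it into a nested description of $B$, and Lemma \ref{lem:Shallit} carries out precisely the induction you sketch, with the compression factor you anticipate (four Sturmian steps per level of the $W$-recurrence when $p,q>4$, two otherwise), using $\{0,1\}$-valued copies of $W_k,U_k$ and handling the alignment by stripping the leading digit of each $W$-block. So the plan is sound and matches Section \ref{section:formula-W}.

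However, your \emph{dictionary} is wrong as stated, and since the dictionary is the step that actually yields (\ref{eqn:formula-W}), this is a genuine (if easily repaired) error. The two letters occurring in $W$ are $q-\lfloor\beta\rfloor$ (the $W$-letter) and $q-\lfloor\beta\rfloor-1$ (the $U$-letter), not $q-\lfloor\beta\rfloor$ and $q-\lfloor\beta\rfloor+1$: e.g.\ for $p,q>4$ the letters are $3$ and $2$ while $\lfloor\beta\rfloor=q-3$. The correct relation is $W(i)=q-\lfloor\beta\rfloor-B(i-1)$, with a \emph{minus} sign and with $B(0)=0$ prepended; your claimed identity $q-\lfloor\beta\rfloor+B(i)=q-\lfloor i\beta\rfloor+\lfloor(i-1)\beta\rfloor$ is false even after reindexing, since $q-\lfloor\beta\rfloor+B(i-1)=q-2\lfloor\beta\rfloor+\lfloor i\beta\rfloor-\lfloor(i-1)\beta\rfloor$ takes the values $q-\lfloor\beta\rfloor$ and $q-\lfloor\beta\rfloor+1$, whereas the right-hand side takes the values $q-\lfloor\beta\rfloor$ and $q-\lfloor\beta\rfloor-1$. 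With the sign fixed, $q-\lfloor\beta\rfloor-B(i-1)=q-\lfloor i\beta\rfloor+\lfloor(i-1)\beta\rfloor$ is immediate from Definition \ref{defn:Sturmian}. Two consequent slips: the base case should be that $W$ begins with the $W$-letter $q-\lfloor\beta\rfloor$, corresponding to the Sturmian value $0$ (not $B(1)=1$); and the remark about $d_k=(U_kU_{k-1})^3$ for $p=3$ is irrelevant here---it matters for Lemma \ref{lem:m}, but Theorem \ref{thm:formula-W} concerns only $W=\lim W_k$, whose recurrence for $p=3$ is handled uniformly with the other cases.
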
   

\section{Proof of Theorem \ref{thm:P_min} and Theorem \ref{thm:bounds}}      \label{section:P_min}



We remind the reader that sequence of animals $A_{p,q}(k)$, that was defined in Definition \ref{defn:A_p,q}, has $n_k$ tiles with
    \begin{equation}
        n_k = 1 + \frac{p(q-2)}{\alpha-\alpha^{-1}} \left( \frac{\alpha^k-1}{\alpha-1} - \frac{\alpha^{-k}-1}{\alpha^{-1}-1}  \right) . \label{eqn:n_k}
        \end{equation}
    In \cite{MaRoTo}, it was proved that  $A_{p,q}(k)$ is extremal and its perimeter is
        \begin{equation}
        \Pmin^{p,q}(n_k) = \frac{p}{\alpha-\alpha^{-1}} \left( \alpha^k - \alpha^{-k} + \alpha^{k-1} - \alpha^{-(k-1)}\right). \label{eqn:P_k}
    \end{equation}
To streamline the notation, in what follows we denote $\Pmin^{p,q}(n_k)$ by $P_k$. Using Equations (\ref{eqn:P_k}) and (\ref{eqn:n_k}) we prove the following Lemma that we use as an essential ingredient for proving Theorem \ref{thm:P_min}.

\begin{lem} \label{lem:P_k-n_k}
Let $k\ge 1$, the perimeter $P_k$ and number of tiles $n_k$ of $A_{p,q}(k)$ satisfy the following equation
\begin{align}   \label{eqn:P_k-n_k}
    P_k- \left( p-2 \right) n_k = 2\left( \frac{p}{\alpha-1} + \frac{p\cdot \alpha^{-k}}{\alpha^{-1} -1 } +1 +\frac{1}{\beta} - \frac{n_k}{\beta}\right).
    \end{align}
\end{lem}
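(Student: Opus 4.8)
The plan is to prove Lemma~\ref{lem:P_k-n_k} by direct substitution of the closed forms (\ref{eqn:n_k}) and (\ref{eqn:P_k}) into both sides of (\ref{eqn:P_k-n_k}) and checking that the resulting expressions in $\alpha$ (and hence in $\beta$, via $\beta=(\alpha+1)/(p-2)$) agree. The only structural facts I would need are the defining quadratic $\alpha^2+2\alpha+1=(p-2)(q-2)\alpha$ for $\alpha$, equivalently $(p-2)(q-2)=\alpha+2+\alpha^{-1}=(\alpha+1)^2/\alpha$, and the relations $\beta=(\alpha+1)/(p-2)$ and $\beta=(q-2)/(1+\alpha^{-1})$; these let me trade every occurrence of $q-2$ for $(\alpha+1)^2/(p(\cdots))$-type quantities and every occurrence of $1/\beta$ for $(p-2)/(\alpha+1)$.

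First I would expand the left-hand side. Write $n_k = 1 + \frac{p(q-2)}{\alpha-\alpha^{-1}}S_k$ where $S_k = \frac{\alpha^k-1}{\alpha-1} - \frac{\alpha^{-k}-1}{\alpha^{-1}-1}$, and $P_k = \frac{p}{\alpha-\alpha^{-1}}\bigl(\alpha^k-\alpha^{-k}+\alpha^{k-1}-\alpha^{-(k-1)}\bigr) = \frac{p(1+\alpha^{-1})}{\alpha-\alpha^{-1}}\bigl(\alpha^k-\alpha^{-k}\bigr)$. Then
\[
P_k - (p-2)n_k = \frac{p}{\alpha-\alpha^{-1}}\Bigl[(1+\alpha^{-1})(\alpha^k-\alpha^{-k}) - (p-2)(q-2)S_k\Bigr] - (p-2).
\]
Using $(p-2)(q-2) = (\alpha+1)^2/\alpha = (1+\alpha^{-1})(\alpha+1)$, the bracket becomes $(1+\alpha^{-1})\bigl[\alpha^k-\alpha^{-k} - (\alpha+1)S_k\bigr]$, so the whole thing is $\frac{p(1+\alpha^{-1})}{\alpha-\alpha^{-1}}\bigl[\alpha^k-\alpha^{-k}-(\alpha+1)S_k\bigr] - (p-2)$. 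A short computation with geometric sums gives $(\alpha+1)S_k = (\alpha+1)\frac{\alpha^k-1}{\alpha-1} + (\alpha+1)\frac{1-\alpha^{-k}}{1-\alpha^{-1}} = (\alpha+1)\frac{\alpha^k-1}{\alpha-1} + \alpha\frac{1-\alpha^{-k}}{1-\alpha^{-1}}\cdot\frac{\alpha+1}{\alpha}$, and after collecting terms one finds $\alpha^k - \alpha^{-k} - (\alpha+1)S_k$ simplifies to a clean expression of the form $\frac{-2\alpha^k + \text{(lower order in }\alpha^k,\alpha^{-k})}{\alpha-1}$ plus a constant; I would carry this out carefully since sign errors here are easy.

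For the right-hand side I would similarly substitute $n_k$ and $\beta$, so that $\frac{n_k}{\beta} = \frac{p-2}{\alpha+1}\bigl(1 + \frac{p(q-2)}{\alpha-\alpha^{-1}}S_k\bigr)$ and again replace $(p-2)(q-2)$ by $(\alpha+1)^2/\alpha$; the factor of $(p-2)/(\alpha+1)$ against $(\alpha+1)^2/\alpha$ collapses nicely, and the $\frac{p}{\alpha-1}$ and $\frac{p\alpha^{-k}}{\alpha^{-1}-1}$ terms are designed precisely to match the geometric-sum pieces coming out of $S_k$. The proof then reduces to verifying an identity in the two monomials $\alpha^k$ and $\alpha^{-k}$ with coefficients rational in $\alpha$: matching the $\alpha^k$-coefficients, the $\alpha^{-k}$-coefficients, and the constant terms on the two sides. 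The main obstacle is purely bookkeeping --- keeping the numerous $(\alpha-1)^{-1}$, $(\alpha^{-1}-1)^{-1}$, $(\alpha-\alpha^{-1})^{-1}$ denominators straight and not dropping the isolated $-(p-2)$ and $+2+2/\beta$ constants --- rather than anything conceptual; since the statement asserts an exact equality that the authors have clearly reverse-engineered, I expect everything to cancel, and (consistent with the paper's stated convention) the heaviest of these manipulations can be deferred to Section~\ref{section:Pain}.
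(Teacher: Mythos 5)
Your overall strategy---substituting the closed forms (\ref{eqn:n_k}) and (\ref{eqn:P_k}), invoking the quadratic $\alpha^2+2\alpha+1=(p-2)(q-2)\alpha$ together with $\beta=(q-2)/(1+\alpha^{-1})$, and then matching the $\alpha^k$-, $\alpha^{-k}$- and constant parts---is exactly the route the paper takes in Section~\ref{section:Pain}, so the plan is sound. One concrete slip must be fixed before the bookkeeping can close: the factorization $P_k=\frac{p(1+\alpha^{-1})}{\alpha-\alpha^{-1}}\bigl(\alpha^k-\alpha^{-k}\bigr)$ is false, since $(1+\alpha^{-1})(\alpha^k-\alpha^{-k})$ ends in $-\alpha^{-(k+1)}$ rather than $-\alpha^{-(k-1)}$; the correct grouping is $\alpha^k-\alpha^{-k}+\alpha^{k-1}-\alpha^{-(k-1)}=(\alpha+1)\bigl(\alpha^{k-1}-\alpha^{-k}\bigr)=(1+\alpha^{-1})\bigl(\alpha^{k}-\alpha^{-(k-1)}\bigr)$. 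With that correction the bracket you form becomes $(1+\alpha^{-1})\bigl[\alpha^k-\alpha^{1-k}-(\alpha+1)S_k\bigr]$ and the coefficient matching proceeds as you describe.
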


\begin{proof}
This follows from Equations (\ref{eqn:n_k}) and (\ref{eqn:P_k}). The details of these calculations can be found in Section \ref{section:Pain}. 
\end{proof}


Let $n>1$ and consider $k$ such that $n_k< n\le n_{k+1}$. Let $m=m(n)$ be the integer number greater or equal than $0$ defined by the condition
    \begin{align} \label{eqn:formula-m}
        \sum_{i=1}^m (q-d_{k}(i)) \leq n-n_k-1 < \sum_{i=1}^{m+1} (q-d_{k}(i)),
    \end{align}
where $d_{k}(i)$ denotes the $i$-th element of the word $d_k$. The sum on the left is interpreted as $0$ for $m=0$.

Here, we prove the following closed formula for $m$ in terms of $n$.

\begin{lem}     \label{lem:m}
Consider $k\ge 2$, and $n$ such that $n_k < n \le n_{k+1}$. Let $l_k = \dfrac{n_{k+1}-n_k}{p}$, and $
    n' = n-n_k - \left \lfloor \frac{n-n_k-1 }{l_k} \right \rfloor \cdot l_k $. 
If $m$ satisfies Equation (\ref{eqn:formula-m}), then
    \begin{align}  \label{eqn:m}
    m(n) = \left \lfloor \frac{n- n_k - \phi_k }{\beta}   - \left \lfloor \frac{n-n_k-1 }{l_k} \right \rfloor \cdot \alpha^{-k} \right \rfloor +  \left \lfloor \frac{n- n_k }{n_{k+1}-n_k} \right \rfloor .
    \end{align}
    Where   $\phi_k$ is given by equation (\ref{eqn:phi}).
\end{lem}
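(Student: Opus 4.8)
The plan is to compute the sum $\sum_{i=1}^m (q - d_k(i))$ in closed form and then invert the defining inequality (\ref{eqn:formula-m}) to solve for $m$. The key observation is that by Theorem \ref{thm:formula-W}, the quantity $q - d_k(i)$ is closely related to a first difference of the Beatty-type sequence $\lfloor i\beta\rfloor$, at least when $d_k$ coincides with the limit word $W$. Indeed, $q - W(i) = \lfloor i\beta \rfloor - \lfloor (i-1)\beta \rfloor$, so a sum of such terms telescopes: $\sum_{i=a+1}^{a+m} (q - W(i)) = \lfloor (a+m)\beta \rfloor - \lfloor a\beta \rfloor$. The appearance of $\lfloor (n-n_k)/\beta \rfloor$-type expressions in (\ref{eqn:m}) is exactly what one expects from inverting such a telescoped floor sum.

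The first step is to understand the structure of $d_k$ well enough to reduce the partial sum $\sum_{i=1}^m (q - d_k(i))$ to a sum over (a prefix of) $W$. From the relations $d_k = (U_k)^p$ for $p>3$ and $d_k = (U_k U_{k-1})^3$ for $p=3$, together with the recurrences in Table \ref{table:UW}, one should express a generic prefix of $d_k$ of length $m$ as a concatenation of $\lfloor (n-n_k-1)/l_k \rfloor$ full copies of a fundamental block (each contributing a number of tiles equal to $l_k = (n_{k+1}-n_k)/p$, which is why the floor term $\lfloor (n-n_k-1)/l_k \rfloor$ appears with coefficient $\alpha^{-k}$ — each block advances the index in $W$ by a fixed amount, and the telescoping sum over one block equals $l_k$, hence contributes $\alpha^{-k}$ after dividing by $\beta \approx $ the relevant scaling), plus a leftover partial block encoding the residue $n'$. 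The shift constant $\phi_k$ in (\ref{eqn:phi}) is the ``phase'' correction accounting for where inside the infinite word $W$ the relevant prefix begins; its three cases ($p=3$ with $n'$ small, $p=3$ with $n'$ large, $p>4$) correspond precisely to which letter of $U_k$ or $W_k$ the boundary falls on, as dictated by the geometry in Figure \ref{fig:A3-recursion}. This is where I expect the main obstacle to lie: carefully matching the combinatorial prefix structure of $d_k$ to a window of the Sturmian word $W$, tracking the exact offset, and verifying that the case split for $\phi_k$ is exhaustive and correct — especially the $p=3$ bifurcation where $d_k$ interleaves $U_k$ and $U_{k-1}$.

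With the telescoped closed form in hand — something of the shape $\sum_{i=1}^m (q - d_k(i)) = \lfloor (\,\text{linear in } m\,) \rfloor + (\text{correction depending only on } k,n')$ — the next step is purely arithmetic: the defining condition (\ref{eqn:formula-m}) says $m$ is the largest integer with $\sum_{i=1}^m(q-d_k(i)) \le n - n_k - 1$. Because $q - d_k(i) \in \{1,2,\dots\}$ takes small positive values and the partial sums are (up to floors) affine in $m$ with slope $\beta^{-1} \cdot(\text{something})$, inverting gives $m$ as a floor of an affine function of $n$, yielding the first floor term in (\ref{eqn:m}). The second summand $\lfloor (n-n_k)/(n_{k+1}-n_k) \rfloor$ is a boundary correction: it equals $1$ exactly when $n = n_{k+1}$, the single value where the prefix is all of $d_k$ and the telescoping ``wraps around'' the full word, which shifts the naive formula by one. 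I would treat that endpoint case $n = n_{k+1}$ separately at the end to confirm the $+1$.

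Throughout, I would lean on Lemma \ref{lem:P_k-n_k} and Equations (\ref{eqn:n_k}), (\ref{eqn:P_k}) only insofar as they let me identify $l_k = (n_{k+1}-n_k)/p$ with the block length and express $\alpha^{-k}$, $\beta$ in compatible terms; the genuinely new content is the prefix-of-$W$ bookkeeping. The heaviest algebra — simplifying the telescoped sum and checking the floor manipulations are valid (i.e.\ that no off-by-one creeps in when we pass from ``$\le$'' in (\ref{eqn:formula-m}) to a floor) — I would relegate to Section \ref{section:Pain}, as the authors do with Lemma \ref{lem:P_k-n_k}. The restriction $k \ge 2$ in the statement is presumably needed so that the decomposition $d_k = (U_k U_{k-1})^3$ (in the $p=3$ case) makes sense and the block structure is already ``generic''; I would note where $k=1$ would fail and why it is excluded.
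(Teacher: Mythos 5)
Your outline matches the paper's own proof essentially step for step: telescoping $q-W(i)=\lfloor i\beta\rfloor-\lfloor(i-1)\beta\rfloor$ over a prefix, decomposing $d_k$ into $p$ blocks of length $\gamma_k+\gamma_{k-1}$ indexed by $j=\lfloor(n-n_k-1)/l_k\rfloor$, identifying $\phi_k$ as the fractional offset $\{\delta w_k\beta\}$ (with the $p=3$ split coming from whether $n'$ lands in the $U_k$- or $U_{k-1}$-sub-block), absorbing the block count via $l_k/\beta-(\gamma_k+\gamma_{k-1})=\alpha^{-k}$, and correcting the endpoint $n=n_{k+1}$ with the final floor term. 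The only point where your reading differs slightly is the source of that last correction, which the paper attributes to the closing tile of the $(k+1)$-st layer surrounding two perimeter vertices rather than one, but the resulting term is the same.
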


\begin{proof}
Theorem \ref{thm:formula-W} give us a formula for the $i$-th element of $W$, with this we get a formula for the $i$-th element of $U_k$. Then it is possible to invert condition (\ref{eqn:formula-m}), thanks to the structure of $d_k$ in terms of $U_k$. The details of the calculations can be found in Section \ref{section:Pain}. 
\end{proof}

Now we have all the ingredients to prove Theorem \ref{thm:P_min}. 


    \begin{proof}[Proof of Theorem \ref{thm:P_min}]
 In \cite{MaRoTo}, it is shown that
    \begin{align}   \label{eqn:Pmin-m}
    \Pmin^{p,q}(n) &= (p-2)(n-n_k) + P_k-2m .
    \end{align}
Simplifying the expression in Equation (\ref{eqn:Pmin-m}) using Lemmas \ref{lem:P_k-n_k} and \ref{lem:m} we obtain the expression for $\Pmin^{p,q}(n)$ given in Equation (\ref{eqn:P_min2}).
    \end{proof}

The rest of this section is devoted to the proof of Theorem \ref{thm:bounds}. 

\begin{lem} \label{lem:bounds}
For all $p,q$, the following inequalities hold
\begin{align}
    \frac{1}{\alpha} < 0.4, \quad \frac{1}{\beta} < 1.4, \quad \frac{p}{\alpha-1}< 4.4,  \quad \frac{\phi_2}{\beta} < 2.8 , \quad \text{and} \quad \frac{p}{\alpha^2} < 1.1.
\end{align}
\end{lem}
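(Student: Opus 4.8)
The plan is to reduce each of the five inequalities to an estimate at the smallest hyperbolic tessellations, exploiting that $\alpha$ depends on $p,q$ only through $s:=(p-2)(q-2)$. Writing the defining relation of $\alpha$ as $g(\alpha):=\alpha+2+\alpha^{-1}=s$ and noting $g'(\alpha)=1-\alpha^{-2}>0$ on $(1,\infty)$, we see that $g$ is an increasing bijection $(1,\infty)\to(4,\infty)$, so $\alpha=g^{-1}(s)$ is strictly increasing in $s$. Since $p,q\ge 3$ are integers with $s>4$, we have $s\ge 5$ (attained only by $\{3,7\}$ and $\{7,3\}$), hence $\alpha\ge\alpha_0:=g^{-1}(5)=\tfrac{3+\sqrt5}{2}$; this already gives $\tfrac1\alpha\le\tfrac{3-\sqrt5}{2}<0.4$. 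I will also use the elementary bound $\alpha(s)>s-2.5$ for all $s\ge 5$: by monotonicity of $g$ it amounts to $g(s-2.5)=s-\tfrac12+\tfrac1{s-2.5}<s$, which holds whenever $s>4.5$.

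Next I would handle $\tfrac1\beta$, $\tfrac{p}{\alpha-1}$ and $\tfrac{p}{\alpha^2}$. For any $\{p,q\}$ with $(p-2)(q-2)=s$ we have $p-2\le s$, and $\alpha$ is determined by $s$, so $\tfrac1\beta=\tfrac{p-2}{\alpha+1}\le\tfrac{s}{\alpha+1}$, $\tfrac{p}{\alpha-1}\le\tfrac{s+2}{\alpha-1}$ and $\tfrac{p}{\alpha^2}\le\tfrac{s+2}{\alpha^2}$. Using $(\alpha+1)^2=s\alpha$ one gets the identity $\tfrac{s}{\alpha+1}=\tfrac{\alpha+1}{\alpha}=1+\tfrac1\alpha\le 1+\tfrac1{\alpha_0}=\tfrac{5-\sqrt5}{2}<1.4$. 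For the other two I would treat $s=5$ exactly, using $\alpha_0^2=3\alpha_0-1=\tfrac{7+3\sqrt5}{2}$: this gives $\tfrac{7}{\alpha_0-1}=\tfrac{7(\sqrt5-1)}{2}<4.4$ and $\tfrac{7}{\alpha_0^2}=\tfrac{14}{7+3\sqrt5}<1.1$; and for $s\ge 6$ the bound $\alpha>s-2.5$ (together with $\alpha^2=(s-2)\alpha-1$) yields $\tfrac{s+2}{\alpha-1}<\tfrac{s+2}{s-3.5}\le 3.2$ and $\tfrac{s+2}{\alpha^2}<\tfrac{s+2}{(s-2)(s-2.5)-1}\le\tfrac{8}{13}$, both safely below the stated constants (one checks the two displayed fractions of $s$ are decreasing on $[6,\infty)$).

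Finally, for $\tfrac{\phi_2}{\beta}$ I would split according to the three cases in (\ref{eqn:phi}) at $k=2$. When $\phi_2=\alpha^{-2}(\alpha-1)$ we have $\phi_2<\alpha^{-1}<0.4$; when $\phi_2=\alpha^{-2}(\alpha-\beta)$ (the case $p>4$), $\alpha-\beta=\alpha-\tfrac{\alpha+1}{p-2}>0$ because $p\ge 5$, so again $\phi_2<\alpha^{-1}<0.4$; and when $\phi_2=\alpha^{-2}(\alpha(\alpha-1)+1)=1-\alpha^{-1}+\alpha^{-2}$, then $\phi_2<1$ since $\alpha>1$. In every case $\phi_2<1$, so combining with $\tfrac1\beta<1.4$ from the previous step gives $\tfrac{\phi_2}{\beta}<1.4<2.8$. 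The only genuinely delicate point is the bound $\tfrac{p}{\alpha-1}<4.4$: its value at $\{7,3\}$ is $\tfrac{7(\sqrt5-1)}{2}\approx 4.326$, so close to $4.4$ that the crude estimate $\alpha>s-2.5$ does not suffice at $s=5$ and one must work with the exact $\alpha_0=\tfrac{3+\sqrt5}{2}$; all the remaining inequalities have ample slack.
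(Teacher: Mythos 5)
Your proof is correct, but it follows a genuinely different route from the paper's. The paper argues via \emph{two-variable} monotonicity: it shows that each of $\alpha$, $1/\beta$, $\alpha/p$, $(\alpha-1)/p$ is monotone in $p$ with $q$ fixed and in $q$ with $p$ fixed (by rewriting them in terms of quantities like $t/p$ with $t=(p-2)(q-2)-2$), and then evaluates at the four minimal tessellations $\{3,7\},\{4,5\},\{5,4\},\{7,3\}$; for $\phi_2/\beta$ it runs a separate three-case computation whose worst bound is $2/\beta<2.8$. You instead collapse everything to the single parameter $s=(p-2)(q-2)$, using that $\alpha$ depends only on $s$ and the crude estimate $p\le s+2$, then treat $s=5$ exactly and $s\ge 6$ via the elementary bound $\alpha>s-2.5$. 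This buys you a cleaner argument: you avoid the fiddly joint-monotonicity verifications (where the paper in fact has a typo in the expression for $\alpha/p$), and your observation that $\phi_2<1$ in all three cases of its definition gives $\phi_2/\beta<1/\beta<1.4$, strictly sharper than the paper's $2.8$. The cost is that $p\le s+2$ is lossy when $p$ is small relative to $s$, but as you correctly note, the only tight inequality, $p/(\alpha-1)<4.4$, is attained at $\{7,3\}$ where $p=s+2$ exactly, so nothing is lost where it matters; all your numerical evaluations ($1/\alpha_0=(3-\sqrt5)/2<0.4$, $1+1/\alpha_0<1.4$, $7(\sqrt5-1)/2<4.4$, $14/(7+3\sqrt5)<1.1$, and the decreasing majorants for $s\ge 6$) check out.
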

\begin{proof}
We claim that the maxima of $\frac{1}{\alpha}, \frac{1}{\beta},  \frac{p}{\alpha-1},  \frac{\phi_2}{\beta} ,  \frac{p}{\alpha^2}$ are attained at the smallest values of $p,q$. Then the bounds are found by evaluating at the cases $\{p,q\}=\{3,7\}, \{4,5\}, \{5,4\}, \{7,3\}$. Details can be found in Section \ref{section:Pain}.

\end{proof}

\begin{proof}[Proof of Theorem \ref{thm:bounds}]
Now we prove the bounds on $\epsilon_{p,q}(n)$. Recall that $\alpha>1$ and $k\ge 2$, note that 
    \begin{align*}
        \frac{\epsilon_{p,q}(n)  }{2}  & \le  \frac{n}{\beta} +  
         1+\frac{1}{\beta} +
        \frac{p}{\alpha-1} +\frac{p\alpha^{-k}}{\alpha^{-1}-1} + \frac{\phi_k}{\beta} + \alpha^{-k} \left \lfloor \frac{n-n_k-1}{l_k} \right \rfloor  - \frac{n}{\beta} \\
        & \le  1+\frac{1}{\beta} +
        \frac{p}{\alpha-1} + \frac{\phi_k}{\beta} + \alpha^{-k} p \\
        & \le   1+\frac{1}{\beta} +
        \frac{p}{\alpha-1} + \frac{\phi_2}{\beta} + \alpha^{-2} p  .
    \end{align*}
Where, we have used $\frac{p}{\alpha^{-1}-1}<0$ and  $\left \lfloor \frac{n-n_k-1}{l_k} \right \rfloor \le p$. Using Lemma \ref{lem:bounds}, we conclude 
    \begin{align*}
      \epsilon_{p,q}(n)  \le  2\left( 1+\frac{1}{\beta} +
        \frac{p}{\alpha-1} + \frac{\phi_2}{\beta} + \alpha^{-2} p  \right)  <2\left( 1+1.4+4.4+2.8+1.1\right) < 22 .
    \end{align*}

Now, using Equation (\ref{eqn:P_k-n_k}), we know
    \begin{align*}
        \epsilon_{p,q}(n_{k+1}) &= \Pmin^{p,q}(n_{k+1}) - \left(p-2-\frac{2}{\beta} \right) n_{k+1}  \\
            & = 2\left(\frac{p}{\alpha-1} + \frac{p \alpha^{-(k+1)}}{\alpha^{-1}-1}  + 1 + \frac{1}{\beta}\right) \ge 2\left( 1 + \frac{1}{\beta}\right) .
    \end{align*}
And for, $n_k<n<n_{k+1}$, we get
    \begin{align*}
    \frac{\epsilon_{p,q}(n)}{2}  & \ge  \frac{n}{\beta} +  \left(
         1+\frac{1}{\beta} +
        \frac{p}{\alpha-1} +\frac{p\alpha^{-k}}{\alpha^{-1}-1} + \frac{\phi_k}{\beta} + \alpha^{-k} \left \lfloor \frac{n-n_k-1}{l_k} \right \rfloor  - \frac{n}{\beta} -1 \right)         \\
        & \ge \frac{1}{\beta} + \frac{p}{\alpha-1}  +\frac{p\alpha^{-k}}{\alpha^{-1}-1}  = \frac{1}{\beta} + \frac{p}{\alpha-1}  - \frac{p\alpha^{-(k-1)}}{\alpha-1}
        \ge \frac{1}{\beta} .
    \end{align*} 
Thus, $\epsilon_{p,q}(n) >0$ for all $n$. 

Now we prove bound (\ref{eqn:bound2}). For $n=n_{k+1}$, and $k\ge 2$,
      \begin{align*}
        \left | \epsilon_{p,q}(n_{k+1}) - 2\left(1+ \frac{p}{\alpha-1} + \frac{1}{\beta}\right) \right| & = \left|  \frac{2 p \alpha^{-(k+1)}}{\alpha^{-1}-1} \right| =  \frac{2 p \alpha^{-k}}{\alpha-1}      \\
        & \le   \frac{2 p \alpha^{-2}}{\alpha-1} = \frac{2 p}{\alpha-1} \cdot \frac{1}{\alpha^2} <  2\cdot 4.4\cdot (0.4)^2 < 1.5  .
    \end{align*}
For $n_k<n<n_{k+1}$ and $k\ge 4$,
          \begin{align*}
      -\frac{2p\alpha^{-(k-1)}}{\alpha-1}  -2 & <   \epsilon_{p,q}(n_{k+1}) - 2\left(1+ \frac{p}{\alpha-1} + \frac{1}{\beta}\right) < \frac{2\phi_k}{\beta}+\frac{2p}{\alpha^k}   \\
    -\frac{2p\alpha^{-3}}{\alpha-1}  -2 & <   \epsilon_{p,q}(n_{k+1}) - 2\left(1+ \frac{p}{\alpha-1} + \frac{1}{\beta}\right) < \frac{2\phi_2}{\alpha^2 \beta}+\frac{2p}{\alpha^4}   \\
    -1.5\cdot 0.4  - 2 & <   \epsilon_{p,q}(n_{k+1}) - 2\left(1+ \frac{p}{\alpha-1} + \frac{1}{\beta}\right) < 2\cdot 2.8(0.4)^2 + 2\cdot  1.1(0.4)^2  \\
     - 2.6 &<   \epsilon_{p,q}(n_{k+1}) - 2\left(1+ \frac{p}{\alpha-1} + \frac{1}{\beta}\right) <  1.3 .
    \end{align*}
This concludes the proof of Theorem \ref{thm:bounds}.

\end{proof}

\section{Proof of Theorem \ref{thm:formula-W}}  \label{section:formula-W}

To prove Theorem \ref{thm:formula-W}, we begin by finding the continued fraction expansion of $\beta$. Then we use Shallit's Theorem to prove that the Sturmian word of $\beta$ and the word $W$ are essentially the same. We do this in Lemmas \ref{lem:beta} and \ref{lem:Shallit}, respectively.

\begin{lem}     \label{lem:beta}
The continued fraction expansion of $\beta$ is 
    \begin{align*}
    &    [q-4;\overline{1,q-6}], && \text{ if $p=3$ and $q\ge 7$,}      \\
    &    [q-3;\overline{2,q-4}], && \text{ if $p=4$ and $q\ge 5$,}      \\
    &    [q-3;\overline{1,p-4,1,q-4}], && \text{ if } p,q>4,        \\
    &    [1;1,\overline{p-4,2}], && \text{ if $p\ge 5$ and $q=4$,}      \\
    &    [0;1,p-5,\overline{1,p-6}], && \text{ if $p\ge 7$ and $q= 3$.}        
    \end{align*}
\end{lem}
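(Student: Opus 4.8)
The plan is to compute the continued fraction expansion of $\beta$ directly from the defining quadratic. Recall from \eqref{eqn:beta} and the relation $\beta = (\alpha+1)/(p-2)$ that $\alpha$ satisfies $\alpha^2 - ((p-2)(q-2)-2)\alpha + 1 = 0$ with $\alpha > 1$; equivalently, writing $c = (p-2)(q-2)$, we have $\alpha + \alpha^{-1} = c - 2$. From this one derives a quadratic for $\beta$ itself: since $(p-2)\beta = \alpha+1$, we get $\alpha = (p-2)\beta - 1$, and substituting into $\alpha^2 = (c-2)\alpha - 1$ yields a monic-up-to-scaling quadratic $(p-2)^2\beta^2 - \big((p-2)^2 + (p-2)(q-2) \cdot \text{stuff}\big)\beta + \cdots = 0$. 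The cleaner route is to note that $\beta$ is a quadratic irrational, so its continued fraction is eventually periodic, and in each of the five regimes the claimed expansion is purely periodic after a short preperiod; so it suffices to verify that the claimed value $[b_0; \overline{b_1,\dots,b_r}]$ (or $[b_0;b_1,\dots,\overline{\cdots}]$) satisfies the same quadratic equation as $\beta$ and lies in the correct interval (e.g. $\beta > 1$ in the first three cases, $0 < \beta < 1$ in the last).

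Concretely, I would proceed case by case. For the generic hyperbolic case $p,q > 4$, set $x = [\,\overline{1,p-4,1,q-4}\,]$ to be the purely periodic tail; then $x$ satisfies $x = 1 + \cfrac{1}{(p-4) + \cfrac{1}{1 + \cfrac{1}{(q-4)+1/x}}}$, which unwinds to a quadratic $A x^2 + B x + C = 0$ with coefficients polynomial in $p,q$. One checks that $\beta = q - 3 + 1/x$ (peeling off the first partial quotient $q-3$ and the next $1$), and verifies that this $\beta$ satisfies $(p-2)^2 \beta^2 - (p-2)\big((p-2) + (p-2)(q-2) - 2\big)\beta + \text{const} = 0$ — i.e. the quadratic obtained by eliminating $\alpha$. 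The degenerate cases $p = 3$, $p = 4$, and $q = 3$, $q = 4$ collapse some partial quotients (e.g. $p-4 \to 0$ forces $[\dots,a,0,b,\dots] = [\dots, a+b, \dots]$), which is exactly why the stated expansions look different; for $q = 3$ one has $\beta < 1$ so $b_0 = 0$, and for $q = 4$ the first block shortens to $[1;1,\overline{p-4,2}]$. I would handle each of these by the same recipe: write down the periodic tail's quadratic, peel off the preperiod, and match against the quadratic for $\beta$.

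The main obstacle is purely computational bookkeeping: the quadratics $A x^2 + Bx + C = 0$ for the periodic tails have coefficients that are degree-two polynomials in $p$ and $q$, and matching them to the eliminated-$\alpha$ quadratic for $\beta$ requires a moderately careful algebraic manipulation — precisely the kind of "painful algebraic computation" the authors have deferred to Section \ref{section:Pain}. A secondary subtlety is confirming that the claimed preperiod length is correct (that no earlier partial quotient could be absorbed or split), which one pins down by checking that the tail value $x$ is a reduced quadratic irrational (i.e. $x > 1$ and its conjugate lies in $(-1,0)$), guaranteeing by Galois' theorem that the expansion is purely periodic from that point and no shorter. Once the interval checks and the quadratic identity are in place for each of the five cases, uniqueness of the continued fraction expansion of an irrational finishes the proof.
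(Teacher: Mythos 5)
Your strategy is sound but runs in the opposite direction from the paper's. The paper applies the continued fraction algorithm \emph{forward} to $\beta$: starting from $(p-2)\beta^2-(p-2)(q-2)\beta+(q-2)=0$ it repeatedly splits off the integer part and inverts what remains, exhibiting the partial quotients $q-3,1,p-4,1,q-4,\dots$ one at a time until the complete quotient recurs, which simultaneously produces the expansion and certifies its periodicity. You instead propose to \emph{verify} the stated answer: solve the quadratic satisfied by the purely periodic tail, apply the preperiod M\"obius map, and match against the quadratic for $\beta$; by uniqueness of the continued fraction expansion of an irrational with positive integer partial quotients this is equally valid, and in fact your appeal to Galois/reduced quadratic irrationals is superfluous once uniqueness is invoked. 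The one place your sketch is too coarse is the root-identification step: the quadratic for $\beta$ has a conjugate root $\beta'=(\alpha^{-1}+1)/(p-2)$, and your interval check must separate $\beta$ from $\beta'$. The integer part alone does this in four of the five cases, but when $q=3$ both roots lie in $(0,1)$ (indeed $\beta-\beta'=\sqrt{1-4/(p-2)}<1$), so the stated check ``$0<\beta<1$'' does not suffice; you need to go one level deeper, e.g.\ verify $\beta>\tfrac12>\beta'$, which is what the partial quotient $b_1=1$ encodes. With that repair and the case-by-case algebra carried out, your argument closes; the two approaches cost comparable computation, but the paper's derivation needs neither the guessed answer nor the conjugate-root discussion.
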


\begin{proof}
$\beta$ satisfies the quadratic equation  $(p-2)\beta^2-(p-2)(q-2)\beta+q-2=0$, then $(p-2)\beta^2=(p-2)(q-2)\beta - (q-2)$. If $p,q>4$, note that $0< \frac{q-2}{(p-2)\beta}<1$, so
	\begin{align*}
	\beta &= q-2 - \frac{q-2}{(p-2)\beta} = q-3 + \frac{(p-2)\beta - (q-2)}{(p-2)\beta}	\\
			&= q-3 +\frac{1}{ \frac{(p-2)\beta}{(p-2)\beta - (q-2)} } 
			        = q-3 +\frac{1}{1 + \frac{q-2}{(p-2)\beta - (q-2)} } \\ 
	        &= q-3 +\frac{1}{ 1+ \frac{1}{ \frac{(p-2)\beta - (q-2)}{q-2} } } 
			         = q-3 +\frac{1}{1+ \frac{1}{ p-3 - \frac{1}{\beta} } }
			          \\ 
			& = q-3 +\frac{1}{1+ \frac{1}{ p-4 + \frac{\beta - 1}{\beta} } } = q-3 +\frac{1}{1+ \frac{1}{ p-4 + \frac{1}{ \frac{\beta}{\beta-1} } } } 
			     \\
			& = q-3 +\frac{1}{1+ \frac{1}{ p-4 + \frac{1}{1 + \frac{1}{\beta-1} } } } =  q-3 +\frac{1}{1+ \frac{1}{ p-4 + \frac{1}{1 + \frac{1}{q-4+ \dots  } } } } \\    
	\end{align*}
By continuing in this way we obtain $\beta = [q-3;\overline{1,p-4,1,q-4}]$ for $p,q>4$. The remaining cases are analogous. 
\end{proof} 

Recall that $B$ is the Sturmian word of $\beta$---See Definition \ref{defn:Sturmian}. The following result gives us a recursive way of describing $B$ through the continued fraction expansion of $\beta$.

\begin{thm} [Shallit \cite{shallit1991characteristic}] \label{thm:Shallit}
Let $\beta$ be an irrational number with continued fraction expansion $[b_0; b_1, b_2, b_3, \dots]$. Define 
	\begin{align*}
	B_0&=0	, \\
	B_1&=0^{b_1-1}1	, \\
	B_{k}&=B_{k-1}^{b_k}B_{k-2},\quad \text{for} \quad n\geq 2 .
	\end{align*}
Then $B=\lim_{k\to \infty} B_{k}$. 
\end{thm}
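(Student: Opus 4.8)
\emph{The plan} is to reduce to the case $\beta\in(0,1)$ and then to establish, and iterate along the continued fraction, a renormalization identity expressing the Sturmian word of $\beta$ as a morphic image of the Sturmian word of its Gauss--map image $\{1/\beta\}$. I would first record the identity $\lfloor(i+1)\beta\rfloor-\lfloor i\beta\rfloor-\lfloor\beta\rfloor=\lfloor\{i\beta\}+\{\beta\}\rfloor$, so that $B(i)=1$ precisely when $\{i\gamma\}\ge 1-\gamma$ with $\gamma:=\{\beta\}$. Since $\gamma=[0;b_1,b_2,\dots]$ has the same partial quotients $b_1,b_2,\dots$ as $\beta$, neither side of the theorem changes when $\beta$ is replaced by $\gamma$, so I may assume $\beta\in(0,1)$ and $b_0=0$. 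Writing $p_k/q_k$ for the convergents of $\beta$, the initial values $|B_0|=1=q_0$, $|B_1|=b_1=q_1$, $\#_1(B_0)=0=p_0$, $\#_1(B_1)=1=p_1$, together with the shared linear recurrence $x_k=b_kx_{k-1}+x_{k-2}$, give $|B_k|=q_k$ and $\#_1(B_k)=p_k$ for all $k$. Because $B_{k+1}=B_k^{b_{k+1}}B_{k-1}$ begins with $B_k$ (as $b_{k+1}\ge 1$), the word $\lim_k B_k$ is well defined with $B_k$ as its prefix of length $q_k$, and it only remains to identify it with $B$.

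\emph{The renormalization identity (the crux).} Set $b:=\lfloor 1/\beta\rfloor=b_1$, $\beta':=\{1/\beta\}=[0;b_2,b_3,\dots]\in(0,1)$, and let $\psi$ be the morphism $0\mapsto 0^{b-1}1$, $1\mapsto 0^{b-1}10$. The heart of the argument is to prove $B=\psi(B')$, where $B'$ is the Sturmian word of $\beta'$. In $\psi(B')$ each letter of $B'$ becomes a block containing exactly one $1$, located at within-block position $b$; since the length-$(j-1)$ prefix of $B'$ has $\lfloor j\beta'\rfloor$ ones, blocks $1,\dots,j-1$ occupy $b(j-1)+\lfloor j\beta'\rfloor$ positions, so the $1$'s of $\psi(B')$ lie exactly at $i_j:=bj+\lfloor j\beta'\rfloor$, $j\ge 1$. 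On the other side, $B(i)=1$ iff $\lfloor(i+1)\beta\rfloor>\lfloor i\beta\rfloor$, and a short computation shows that the $j$-th index with this property equals $\lfloor j/\beta\rfloor=bj+\lfloor j\beta'\rfloor=i_j$. Since both $B$ and $\psi(B')$ are $\{0,1\}$-words whose $1$'s occur exactly at $\{i_j:j\ge 1\}$, they coincide. I expect this orbit-counting step to be the only place where genuine work is required; one could alternatively read off the prefix structure of $B$ from the three-distance theorem applied to $\{0,\beta,\dots,(q_k-1)\beta\}$ on the circle, but the renormalization route looks cleaner.

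\emph{Iteration and conclusion.} Finally I would iterate the identity down the continued fraction: with $\beta^{(0)}=\beta$, $\beta^{(j+1)}=\{1/\beta^{(j)}\}=[0;b_{j+2},b_{j+3},\dots]$ (so each $\beta^{(j)}=[0;b_{j+1},b_{j+2},\dots]$ is irrational in $(0,1)$), and $\psi_j\colon 0\mapsto 0^{b_{j+1}-1}1,\ 1\mapsto 0^{b_{j+1}-1}10$, applying the identity at each level gives $B=\psi_0\psi_1\cdots\psi_{k-1}(B^{(k)})$ for every $k\ge 1$, where $B^{(k)}$ is the Sturmian word of $\beta^{(k)}$. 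A straightforward induction, using $\psi_j(0)=0^{b_{j+1}-1}1$ and $\psi_j(1)=\psi_j(0)\,0$, shows $\psi_0\cdots\psi_{k-1}(0)=B_k$ and $\psi_0\cdots\psi_{k-1}(1)=B_kB_{k-1}$; in particular $\psi_0\cdots\psi_{k-1}(1)$ begins with $B_k$, so $B=\psi_0\cdots\psi_{k-1}(B^{(k)})$ begins with $B_k$ no matter which letter starts $B^{(k)}$. Since $|B_k|=q_k\to\infty$, this forces $B=\lim_k B_k$, which is the claim.
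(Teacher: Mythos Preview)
The paper does not prove this statement: Theorem~\ref{thm:Shallit} is quoted from Shallit~\cite{shallit1991characteristic} and used as a black box in the proof of Lemma~\ref{lem:Shallit}. There is therefore no ``paper's own proof'' to compare against.

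That said, your proposal is a correct self-contained proof along standard lines. The reduction to $\beta\in(0,1)$ is legitimate because the definition of $B(i)$ is unchanged under $\beta\mapsto\{\beta\}$ and the recursion for $B_k$ depends only on $b_1,b_2,\dots$. The renormalization identity $B=\psi(B')$ with $\psi\colon 0\mapsto 0^{b-1}1,\ 1\mapsto 0^{b-1}10$ and $\beta'=\{1/\beta\}$ is the classical Gauss--map recoding of a mechanical sequence; your computation that the $j$-th occurrence of $1$ in $B$ sits at $\lfloor j/\beta\rfloor=bj+\lfloor j\beta'\rfloor$, and that this matches the position of the $j$-th $1$ in $\psi(B')$, is right (you are implicitly using that $\beta$ is irrational so $j/\beta$ is never an integer). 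The induction $\psi_0\cdots\psi_{k-1}(0)=B_k$, $\psi_0\cdots\psi_{k-1}(1)=B_kB_{k-1}$ goes through exactly as you say, and since both images begin with $B_k$ you obtain that $B$ has $B_k$ as a prefix for every $k$, hence $B=\lim_k B_k$. This is essentially the argument one finds in the Sturmian-word literature (e.g., Lothaire's \emph{Algebraic Combinatorics on Words}); Shallit's original proof proceeds somewhat differently, via a direct analysis of the digits using Ostrowski-type representations, but your morphic/renormalization route is arguably the cleanest modern approach.
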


We now use Shallit's Theorem to show that $W$ and $B$ are closely related. To make the relation between $W$ and $B$ more transparent we introduce the words $\U_k, \W_k$ defined by the recurrence equations contained in Table \ref{table:CD}. Note that these are the same recurrence relations as $U_k$ and $W_k$, that we defined in Table \ref{table:UW}, but starting at 1 and 0, respectively. 

\begin{table}
\caption{Table showing equations for recurrences of words $C_k$ and $D_k$.}
\label{table:CD}
    \centering
\begin{tabular} {  c|c|c|c}
Cases & $p=3$ and $q\ge 7$ & $p\ge4$ and $q \ge 4$  & $p \ge 7$ and $q=3$ \\
 \hline
 \hline

	&  $\U_1  =1,  \, \W_1=0$            
	& $\U_1=1, \,  \W_1=0$       
	& $\U_1 = 1, \, \W_1 = 01$   \\
	 Recurrence &
	  $\U_{k+1} = \W_k \U_k^{q-5}$        
	& $\U_{k+1} = \W_k \U_k^{p-4} \left( \W_k \U_k^{p-3} \right)^{q-3}$
	& $\U_{k+1} = \W_k \U_k^{p-5}$      \\
	& 	$\W_{k+1} = \W_k \U_k^{q-6}$       
	&  $\W_{k+1} = \W_k \U_k^{p-4} \left( \W_k \U_k^{p-3} \right)^{q-4} $
	& 	$\W_{k+1} = \W_k \U_k^{p-6} $  \\
	\hline
\end{tabular}	
\end{table}	
	
Let $\W=\lim_{k\to \infty} \W_k$. Observe that $\W(i)=4-W(i)$, if $p=3$, and $\W(i)=3-W(i)$, if $p>3$.

\begin{lem} \label{lem:Shallit} 
Denote by $\W(i)$ and $B(i)$ the $i$-th elements of the words $\W$ and $B$, respectively. Then $\W(i)=B(i-1)$. 
\end{lem}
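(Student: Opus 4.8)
\textbf{Proof proposal for Lemma \ref{lem:Shallit}.}

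The plan is to show that the recursive construction of $\W=\lim_k \W_k$ from Table \ref{table:CD} matches, up to a single index shift, the Shallit recursion for $B=\lim_k B_k$ associated to the continued fraction expansion of $\beta$ computed in Lemma \ref{lem:beta}. Since both words are obtained as limits of nested finite words built by concatenation of blocks, it suffices to set up an explicit correspondence between the finite approximants: I will produce a strictly increasing sequence of indices $k \mapsto j(k)$ such that $\W_{k}$ equals $B_{j(k)}$ (or a prefix/suffix thereof, depending on the parity bookkeeping below), after the reindexing $i \mapsto i-1$ coming from the fact that $B$ is indexed from $i\ge 1$ while the defining sum for $B(i)$ uses $\lfloor (i+1)\beta\rfloor-\lfloor i\beta\rfloor$, i.e. the "$0$th'' Shallit letter $B_0=0$ must be dropped to align with $\W(1)$.

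First I would treat the generic case $p,q>4$, where Lemma \ref{lem:beta} gives $\beta=[q-3;\overline{1,p-4,1,q-4}]$, so the partial quotients are $b_1=1$, $b_2=p-4$, $b_3=1$, $b_4=q-4$, and then periodic with period $4$. Writing out the Shallit recursion: $B_1 = 0^{b_1-1}1 = 1$, $B_2 = B_1^{b_2}B_0 = 1^{p-4}0$, $B_3 = B_2^{b_3}B_1 = 1^{p-4}0\,1$, $B_4 = B_3^{b_4}B_2 = (1^{p-4}01)^{q-4}1^{p-4}0$, and so on. Now compare with $\W_k$, $\U_k$ from Table \ref{table:CD}: $\U_1=1$, $\W_1=0$, and $\U_2 = \W_1\U_1^{p-4}(\W_1\U_1^{p-3})^{q-3} = 0\,1^{p-4}(0\,1^{p-3})^{q-3}$, $\W_2 = 0\,1^{p-4}(0\,1^{p-3})^{q-4}$. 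The key observation is that these $\U_k,\W_k$ are precisely the "reversals'' of the $B$-approximants, in the following sense: if $\tilde B_j$ denotes $B_j$ with the alphabet unchanged but read together with the shift of index, then $\W_{k}$ read from position $2$ onward coincides with the appropriate $B_j$; equivalently $\W(i+1)=B(i)$, which unravels to the claimed $\W(i)=B(i-1)$. I would make this precise by an induction on $k$ that simultaneously tracks $\U_k$ and $\W_k$: the inductive hypothesis is that $\W_k = B_{4(k-1)}$ and $\U_k = B_{4(k-1)+?}\,\cdots$ as concatenations matching the period-$4$ block structure of the continued fraction, and the inductive step is a direct substitution of the recurrences, using that one period of the Shallit recursion (four steps, with quotients $1,p-4,1,q-4$) reproduces exactly the passage $(\U_k,\W_k)\mapsto(\U_{k+1},\W_{k+1})$.

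Then I would dispatch the remaining four cases ($p=3,q\ge7$; $p=4,q\ge5$; $p\ge5,q=4$; $p\ge7,q=3$) by the same method, each time reading off the partial quotients from Lemma \ref{lem:beta} and matching them against the period-$1$ or period-$2$ recurrences in Table \ref{table:CD}; these are shorter periods and the bookkeeping is lighter. The cases $q=4$ and $q=3$ need slight care because $b_0$ is $1$ or $0$ rather than $q-3$, which changes how $B_1$ is seeded, and because $\W_1 = 01$ has length two in the $q=3$ column — but this is exactly compensated by the extra initial partial quotients ($[1;1,\dots]$ and $[0;1,p-5,\dots]$), so the alignment still works after the $i\mapsto i-1$ shift. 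I expect the main obstacle to be purely notational: getting the index shift and the parity/period bookkeeping exactly right so that "$\W_k$ matches $B_j$'' holds on the nose (including the boundary letters where two consecutive blocks abut), rather than up to an off-by-one error at the seams. Once the finite approximants are matched, passing to the limit is immediate since both $\W$ and $B$ are the common refinement of their respective nested sequences. Full details are routine and are recorded in Section \ref{section:Pain}.
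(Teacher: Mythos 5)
Your proposal follows essentially the same route as the paper: it uses the continued fraction expansion from Lemma \ref{lem:beta} together with Shallit's recursion (Theorem \ref{thm:Shallit}) and proves by induction on $k$ that the Shallit approximants $B_j$ and the words $\W_k,\U_k$ agree up to a one-letter shift, then passes to the limit. The one detail still to pin down is the exact form of the inductive invariant --- the paper's version is $B_{4k}=\W_{k+1}'0$ (the leading $0$ of $\W_{k+1}$ rotated to the end) together with three companion identities involving $B_{4k-3}$, $B_{4k-2}$, $B_{4k-1}$ and powers of $\U_k$ --- but this is exactly the seam bookkeeping you anticipate, and your computed base cases already exhibit it.
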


\begin{proof}
Denote by $\W_k'$ the word $\W_k$ without its first (left-most) digit. That digit is always 0, i.e., $\W_k=0\W_k'$.

Fix $p,q>4$. We claim that, for $k\ge 1$,
    \begin{align}
        0B_{4k-3} \W_{k}' &= \W_k\U_{k} ,     \label{eqn:SW1-pq}   \\
        B_{4k-2} &= \W_{k}' \U_k^{p-4} 0 ,     \label{eqn:SW2-pq}        \\
        0B_{4k-1} \W_{k+1}' &= \U_{k+1} ,     \label{eqn:SW3-pq}   \\
        B_{4k} &= \W_{k+1}' 0 .     \label{eqn:SW4-pq} 
    \end{align}
We prove these relations by induction. The continued fraction expansion of $\beta$ for $p,q>4$ is  $[q-3;\overline{1,p-4,1,q-4}]$. Then, Shallit's recurrence relations, stated in Theorem \ref{thm:Shallit}, for $k\ge 1$, give
    \begin{align*}
        B_{4k+1} &= B_{4k} B_{4k-1}  , \\
        B_{4k+2} &= B_{4k+1}^{p-4} B_{4k} .\\
        B_{4k+3} &= B_{4k+2} B_{4k+1}  , \\
        B_{4k+4} &= B_{4k+3}^{q-4} B_{4k+2} .
    \end{align*}
Also, $B_1=1$,  $B_2= 1^{p-4}0$, $B_3=1^{p-4}01$, $B_4=1^{p-4}\left(01^{p-3}\right)^{q-4}0$, and $\W_2=01^{p-4}\left(01^{p-3}\right)^{q-4}$, $\U_2=01^{p-4}\left(01^{p-3}\right)^{q-3}$ . So, equations (\ref{eqn:SW1-pq})-(\ref{eqn:SW4-pq}) hold for $k=1$. Assume these equations are also true for $k>1$. Then, 
    \begin{align*}
    0B_{4k+1} \W_{k+1}'&= 0 B_{4k}B_{4k-1}\W_{k+1}'     \\
                &= 0 \W'_{k+1}0 B_{4k-1} \W_{k+1}'       \\
                &= \W_{k+1} \U_{k+1} .
     \end{align*}
Similarly, 
    \begin{align*}
        B_{4k+2} &= B_{4k+1}^{p-4}B_{4k} = \left( B_{4k}B_{4k-1}\right)^{p-4} B_{4k}      \\
                &= \left( \W_{k+1}'0B_{4k-1}    \right)^{p-4} \W_{k+1}'0 
                    =  \W_{k+1}'\left( 0B_{4k-1} \W_{k+1}'\right)^{p-4}0\\
                &= \W_{k+1}' \U_{k+1}^{p-4}0  .
     \end{align*}
Also,     
       \begin{align*}
    0B_{4k+3} \W_{k+2}'&= 0 B_{4k+2}B_{4k+1}\W_{k+2}'     \\
                &= 0 \left( \W_{k+1}' \U_{k+1}^{p-4}0  \right)B_{4k+1}  \W'_{k+1} \U_{k+1}^{p-4} \left( \W_{k+1}\U_{k+1}^{p-3}\right)^{q-4}  \\
                &= \W_{k+1} \U_{k+1}^{p-4} \left( 0 B_{4k+1} \W'_{k+1} \right) \U_{k+1}^{p-4} \left( \W_{k+1}\U_{k+1}^{p-3}\right)^{q-4}  \\
                &= \W_{k+1} \U_{k+1}^{p-4} \left(\W_{k+1} \U_{k+1}\right) \U_{k+1}^{p-4} \left( \W_{k+1}\U_{k+1}^{p-3}\right)^{q-4} =  \U_{k+2}  .
     \end{align*}
Finally
    \begin{align*}
        B_{4k+4} &= B_{4k+3}^{q-4}B_{4k+2} = \left( B_{4k+2}B_{4k+1}\right)^{q-4} B_{4k+2}      \\
                &= \left( \W_{k+1}'\U_{k+1}^{p-4}0 B_{4k+1} \right)^{q-4} \W_{k+1}'\U_{k+1}^{p-4}0
            = \W_{k+1}'\U_{k+1}^{p-4} \left( 0 B_{4k+1} \W_{k+1}'\U_{k+1}^{p-4}\right)^{q-4}0 \\
                &= \W_{k+1}' \U_{k+1}^{p-4}\left( \W_{k+1}\U_{k+1}\U_{k+1}^{p-4}\right)^{q-4}0 
                    = \W_{k+2}'0
     \end{align*}
This finishes the induction. 

For the cases $\{p=4,q\ge5\}$, $\{p\ge 5,q=4\}$, $\{p=3,q\ge 7\}$, and $\{p\ge 7,q=3\}$, instead of Equations  
(\ref{eqn:SW1-pq})-(\ref{eqn:SW4-pq}), the following holds
\begin{align}
             0B_{2k-1} \W_{k+1}' = \U_{k+1} ,  \text{ if } q\neq 4, \quad   & \text{ or } \quad 
             \W_{k+1}0B_{2k-1} \W_{k+1}' = \U_{k+1}  ,  \text{ if } q=4  ,
        \label{eqn:SW1}   \\
      \, \, \,   B_{2k} & = \W_{k+1}' 0 .     \label{eqn:SW2} 
    \end{align}
A proof by induction of Equations (\ref{eqn:SW1}) and (\ref{eqn:SW2}) is analogous as before. 

We conclude, by Equations (\ref{eqn:SW4-pq}) and (\ref{eqn:SW2}),  that $\W(i)=B(i-1)$ for all $p,q$.
\end{proof}

\begin{proof}[Proof of Theorem \ref{thm:formula-W}]
From Lemma \ref{lem:Shallit}, for all $p$ and $q$ such that $(p-2)(q-2)>4$, we have that 
    \begin{align*}
        W(i) =   
        \begin{cases}
             4-B(i-1) , & \text{ if  } p=3, \\
             3-B(i-1) , & \text{ if  } p>3. \\
        \end{cases}
    \end{align*}
        From Lemma \ref{lem:beta}, we know that
    \begin{align*}
        \lfloor \beta \rfloor = 
        \begin{cases}
             q-4 , & \text{ if  } p=3, \\
             q-3 , & \text{ if  } p>3. \\
        \end{cases}
    \end{align*}
    Then, we get
        \begin{align*}
        W(i) &= q- \lfloor \beta \rfloor - B(i-1)   = q- \lfloor i\beta \rfloor + \lfloor (i-1) \beta \rfloor .
        \end{align*}
\end{proof}

\section{Proof of Lemma \ref{lem:P_k-n_k},  Lemma \ref{lem:m} and Lemma \ref{lem:bounds}}  \label{section:Pain}

\begin{proof}[Proof of Lemma \ref{lem:P_k-n_k}]
We compute $P_k - (p-2)(n_k-1)$ and factor by $\alpha^k$-terms and $\alpha^{-k}$-terms.
    \begin{align*}
   \big( P_k - (p-2)(n_k-1) \big) \left(\frac{\alpha-\alpha^{-1}}{p}\right) & =  \alpha^k+\alpha^{k-1} - (p-2)(q-2)\left(\frac{\alpha^k-1}{\alpha-1}\right)  \\
                &  \quad -  \alpha^{-k} -\alpha^{-(k-1)} + (p-2)(q-2)\left(\frac{\alpha^{-k}-1}{\alpha^{-1}-1} \right)    \\  
                & = \frac{\alpha^{k+1}+\alpha^{k}-\alpha^k-\alpha^{k-1} - (p-2)(q-2)(\alpha^k-1)}{\alpha-1}  \\
                & \quad  -  \frac{\alpha^{-(k+1)}+\alpha^{-k}-\alpha^{-k} -\alpha^{-(k-1)} - (p-2)(q-2)(\alpha^{-k}-1)}{\alpha^{-1}-1}  .
                \end{align*}
                
    Recall that $\alpha$ satisfies the cuadratic equation $\alpha^2 +2\alpha + 1 = (p-2)(q-2)\alpha $. It follows that  $\alpha^{k+1} +2\alpha^k + \alpha^{k-1} = (p-2)(q-2)a^k $ for $k \geq 1$. Thus, when we substitute this expression we obtain
    \begin{align*}
     \big( P_k -  (p-2)&(n_k-1) \big) \left(\frac{\alpha-\alpha^{-1}}{p}\right) = \\
    & =  \frac{\alpha^{k+1}-\alpha^{k-1} - (\alpha^{k+1}+2\alpha^k+\alpha^{-k}) +  (p-2)(q-2)}{\alpha-1}  \\
                & \quad  -  \frac{\alpha^{-(k+1)} -\alpha^{-(k-1)} - (\alpha^{-(k+1)}+2\alpha^{-k}+\alpha^{-(k-1)}) +  (p-2)(q-2)}{\alpha^{-1}-1}      \\ 
                & =  \frac{-2\alpha^{k}(1+\alpha^{-1}) + 2(1+\alpha^{-1})-2(1+\alpha^{-1}) +   (p-2)(q-2)}{\alpha-1}  \\
                & \quad  -  \frac{ -2\alpha^{-k}(1+\alpha^{-1}) + 2\alpha^{-k-1}-2\alpha^{-k+1} + 2(1+\alpha^{-1})-2(1+\alpha^{-1}) + (p-2)(q-2)}{\alpha^{-1}-1}      \\  
                & =  \frac{-2(\alpha^{k}-1)(1+\alpha^{-1}) }{\alpha-1} + \frac{ 2(\alpha^{-k}-1)(1+\alpha^{-1})}{\alpha^{-1}-1}
                  \\
                & \quad  +  \frac{-2(1+\alpha^{-1})+ (p-2)(q-2)}{\alpha-1} -   \frac{2\alpha^{-k}(\alpha^{-1}-\alpha)  -2(1+\alpha^{-1}) + (p-2)(q-2)}{\alpha^{-1}-1}  .      
    \end{align*}
    Now, $n_k - 1 = \frac{p(q-2)}{\alpha-\alpha^{-1}} \left( \frac{\alpha^k-1}{\alpha-1} - \frac{\alpha^{-k}-1}{\alpha^{-1}-1}  \right)  , $ and $\alpha+2+\alpha^{-1}= (p-2)(q-2)$, so
    \begin{align*}
    P_k - (p-2)(n_k-1)   
                & = \frac{-2(n_k-1)(1+\alpha^{-1})}{q-2} +                 \frac{p}{\alpha-\alpha^{-1}} \left(  \frac{\alpha-\alpha^{-1}}{\alpha-1} -   \frac{2\alpha^{-k}(\alpha^{-1}-\alpha) + \alpha-\alpha^{-1}}{\alpha^{-1}-1}   \right)     
    \end{align*}
Finally, we use that $\beta=\frac{q-2}{1+\alpha^{-1}}$ and rearrange terms to obtain 
    \begin{align*}
        P_k - (p-2)n_k & = \frac{-2(n_k-1)}{\beta} + \frac{p}{\alpha-1} - \frac{p}{\alpha^{-1}-1} - (p-2) +   \frac{2p\alpha^{-k}}{\alpha^{-1}-1}.   
    \end{align*}
This is equivalent to (\ref{eqn:P_k-n_k}).
\end{proof}

Let $\gamma_k=\frac{\alpha^k-\alpha^{-k}}{\alpha-\alpha^{-1}}$, then $\gamma_{k+1}+2\gamma_k+\gamma_{k-1}=(p-2)(q-2)\gamma_k$. Since $\gamma_0=0$ and $\gamma_1=1$, it follows that $\gamma_k$ is an integer for all $k\geq 1$. 

\begin{lem}
Let $U_k$ and $W_k$ be the words defined in Table \ref{table:UW}, and let $u_k$ and $w_k$ be their length, respectively. Then, 
	\begin{align} \label{eqn:length-uk}
    u_k =  
    \begin{cases}
        \gamma_k    ,      \\  
	    \gamma_k+\gamma_{k-1}  ,     \\
	    \gamma_k+\gamma_{k-1}  , 
    \end{cases}
     w_k =  
    \begin{cases}
        \gamma_k-\gamma_{k-1},    & \text{if $p=3, q\ge 7$ ,} \\
	    \gamma_k-(p-3)\gamma_{k-1} , & \text{if $p,q\ge 4$,} \\
	    \gamma_k-\gamma_{k-2} , & \text{if $p\ge7, q=3$.}
    \end{cases}
    \end{align}
	
\end{lem}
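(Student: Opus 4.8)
The plan is to prove the length formulas \eqref{eqn:length-uk} directly by induction on $k$, using the recurrence relations for $U_k$ and $W_k$ given in Table \ref{table:UW} together with the linear recurrence $\gamma_{k+1} = (p-2)(q-2)\gamma_k - 2\gamma_k - \gamma_{k-1}$ satisfied by $\gamma_k$. First I would dispose of the base cases: for $p=3,q\ge 7$ we have $u_1 = |3| = 1 = \gamma_1$ and $w_1 = |4| = 1 = \gamma_1 - \gamma_0$ (since $\gamma_0 = 0$); for $p,q\ge 4$ we have $u_1 = |2| = 1 = \gamma_1 + \gamma_0$ and $w_1 = |3| = 1 = \gamma_1 - (p-3)\gamma_0$; for $p\ge 7, q=3$ we have $u_1 = |2| = 1 = \gamma_1 + \gamma_0$ and $w_1 = |32| = 2 = \gamma_1 - \gamma_{-1}$, where I would note $\gamma_{-1} = -1$ from the recurrence (or just check the $k=2$ case as the true base). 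It may be cleaner to also verify $k=2$ explicitly in each column so the substitution recurrences below apply uniformly.

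For the inductive step, I would read off from Table \ref{table:UW} the length recurrences by counting letters. In the case $p,q\ge 4$ the recurrence $U_{k+1} = W_k U_k^{p-4}(W_k U_k^{p-3})^{q-3}$ gives
\begin{align*}
u_{k+1} &= w_k + (p-4)u_k + (q-3)\big(w_k + (p-3)u_k\big) = (q-2)w_k + \big((p-4)+(q-3)(p-3)\big)u_k,
\end{align*}
and $W_{k+1} = W_k U_k^{p-4}(W_k U_k^{p-3})^{q-4}$ gives $w_{k+1} = (q-3)w_k + \big((p-4)+(q-4)(p-3)\big)u_k$. Substituting the inductive hypotheses $u_k = \gamma_k + \gamma_{k-1}$ and $w_k = \gamma_k - (p-3)\gamma_{k-1}$ and collecting the coefficients of $\gamma_k$ and $\gamma_{k-1}$, the $\gamma_k$-coefficient becomes $(q-2) + (p-4) + (q-3)(p-3) = (p-2)(q-2) - 2$ and the $\gamma_{k-1}$-coefficient simplifies (using $(q-2)(-(p-3)) + (p-4) + (q-3)(p-3)\cdot 1$ after expanding via $\gamma_k + \gamma_{k-1}$) to exactly $-1$; hence $u_{k+1} = \big((p-2)(q-2)-2\big)\gamma_k - \gamma_{k-1} = \gamma_{k+1}$ (wait --- one must instead land on $\gamma_{k+1} + \gamma_k$, so the bookkeeping of which $\gamma$'s appear must be done carefully). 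The analogous computation handles $w_{k+1}$. The cases $p=3$ and $q=3$ are entirely parallel: for $p=3$, $U_{k+1} = W_k U_k^{q-5}$ and $W_{k+1} = W_k U_k^{q-6}$ give $u_{k+1} = w_k + (q-5)u_k$, $w_{k+1} = w_k + (q-6)u_k$; substituting $u_k = \gamma_k$, $w_k = \gamma_k - \gamma_{k-1}$ and invoking $(q-5)+1 = q-4$ together with $\gamma_{k+1} = (q-4)\gamma_k - \gamma_{k-1}$ (the $p=3$ specialization, since $(p-2)(q-2)-2 = q-4$) yields the claim. For $q=3$, $U_{k+1} = W_k U_k^{p-5}$, $W_{k+1} = W_k U_k^{p-6}$ with $u_k = \gamma_k + \gamma_{k-1}$, $w_k = \gamma_k - \gamma_{k-2}$ works the same way.

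The main obstacle I anticipate is purely the coefficient bookkeeping in the induction: one has to keep precise track, when $u_k$ and $w_k$ are written as combinations $a\gamma_k + b\gamma_{k-1}$, of the fact that the recurrence for $\gamma$ relates $\gamma_{k+1}$ to $\gamma_k$ and $\gamma_{k-1}$, so a raw substitution produces a combination of $\gamma_k$ and $\gamma_{k-1}$ which must be recognized as $\gamma_{k+1} + (\text{something})\gamma_k$ or the appropriate target. Concretely the identity $(p-2)(q-2) = \alpha + 2 + \alpha^{-1}$ is not needed here, only the integer recurrence $\gamma_{k+1} + 2\gamma_k + \gamma_{k-1} = (p-2)(q-2)\gamma_k$ and the initial values $\gamma_0 = 0$, $\gamma_1 = 1$ (which already establish integrality, as noted in the paragraph preceding the Lemma). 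Since the three columns are structurally identical, I would write the $p,q\ge 4$ case in full and remark that the other two are obtained by the same mechanical substitution, as is done elsewhere in the paper. No deeper idea is required --- the statement is a routine but slightly fiddly verification that the word-length recurrences implied by Table \ref{table:UW} are solved by the stated linear combinations of $\gamma_k$.
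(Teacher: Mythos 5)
Your proposal takes essentially the same route as the paper's proof: extract the length recurrences for $u_k$ and $w_k$ by counting letters in the substitution rules of Table \ref{table:UW}, then induct using $\gamma_{k+1}+2\gamma_k+\gamma_{k-1}=(p-2)(q-2)\gamma_k$ with $\gamma_0=0$, $\gamma_1=1$ (and $\gamma_{-1}=-1$ for the $q=3$ base case). One remark in your favor: your count $w_{k+1}=(q-3)w_k+\bigl((p-4)+(q-4)(p-3)\bigr)u_k$ for $p,q\ge 4$ is the correct one, since only $q-3$ copies of $W_k$ occur in $W_{k+1}$ — the paper's displayed recurrence (\ref{eqn:recurrence-wk}) shows $(q-2)w_k$ there, which appears to be a typo and is inconsistent with the stated closed form (the correct $\gamma_k$-coefficient in your $u_{k+1}$ computation is $(p-2)(q-2)-1$, yielding $\gamma_{k+1}+\gamma_k$ as you note in your self-correction).
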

\begin{proof}
From the definition of the words $U_k$ and $W_k$, their lengths satisfy the following recurrence relations for the cases $p=3$, $p,q\ge 4$, and $q=3$, respectively.
	
	\begin{align} \label{eqn:recurrence-uk}
    u_{k+1} =  
    \begin{cases}
        (q-5)u_k+ w_k,      \\  
	   ((q-3)(p-3)+(p-4))u_k+(q-2)w_k ,     \\
	    (p-5)u_k+ w_k  , 
    \end{cases}
    \end{align}
    \begin{align} \label{eqn:recurrence-wk}
    w_{k+1} =
    \begin{cases}  
        (q-6)u_k+ w_k  ,  \\
	   ((q-4)(p-3)+(p-4))u_k+(q-2)w_k  ,  \\
	   (p-6)u_k+ w_k ,
    \end{cases}
    \end{align}	
Formulas (\ref{eqn:recurrence-uk}) and (\ref{eqn:recurrence-wk}) hold for $k=0$. Then an induction argument proves the result for any $k\geq 1$.

\end{proof}

\begin{proof}[Proof of Lemma \ref{lem:m}]
Denote by $W(i)$, $U_k(i)$ the $i$-th element of the words $W=\lim W_k$, $U_k$, respectively. Recall that,
    \begin{align*}
    W_{k+1} = 
        \begin{cases}
        W_kU_k^{q-6}, & \text{if $p=3, q\ge 7$}      \\
        W_k U_k^{p-4} \left( W_k U_k^{p-3} \right)^{q-4} , & \text{if $p,q\ge 4$}      \\
	    W_k U_k^{p-6} , & \text{if $p\ge 7, q=3$}.      
        \end{cases}
    \end{align*}
    Combining this with formula  $ W(i)= q - \lfloor i \beta \rfloor + \lfloor (i-1) \beta \rfloor $ of Theorem \ref{thm:formula-W} we obtain, 
    \begin{align} \label{eqn:formulaU_k,i}
    U_k(i) = 
        \begin{cases}
        W(i+w_k) = q - \lfloor (i+w_k) \beta \rfloor + \lfloor (i-1+w_k) \beta \rfloor ,  & \text{if $p=3$ or $p> 4$}      \\
        W(i+2w_k) = q - \lfloor (i+2w_k) \beta \rfloor + \lfloor (i-1+2w_k)  \beta \rfloor,  & \text{if $p =  4$}      \\
        \end{cases}
    \end{align}
Now, $d_k=(U_k)^p$, if $p>3, k\ge 1$; and $d_k=(U_k U_{k-1})^3$ if $p=3, k\ge 2$. In either case, $d_k$ consists of $p$ identical blocks, each of length $\gamma_k+\gamma_{k-1}$. 

Recall that $m=m(n)$ satisfies: 
    \begin{align*} 
        \sum_{i=1}^m (q-d_{k}(i)) \leq n- n_k -1 < \sum_{i=1}^{m+1} (q-d_{k}(i)).        
    \end{align*}

For all $p$, the expression $p \cdot \sum_{i=1}^{\gamma_k+\gamma_{k-1}} (q-d_{k,i})$ counts number of tiles in the $k+1$ layer of $A_{p,q}(k+1)$. That is, $\sum_{i=1}^{\gamma_k+\gamma_{k-1}} (q-d_{k,i}) = l_k$, where 
    \begin{align*}
    l_k & =\frac{n_{k+1}-n_k}{p}    \\
        & = \frac{q-2}{\alpha-\alpha^{-1}} \left( \frac{\alpha^{k+1}-1}{\alpha-1} -\frac{\alpha^{-(k+1)}-1}{\alpha^{-1}-1} -\frac{\alpha^k-1}{\alpha-1} + \frac{\alpha^{-k}-1}{\alpha^{-1}-1}\right) \\
        & = \frac{q-2}{\alpha-\alpha^{-1}} \left( \frac{\alpha^{k+1}-\alpha^k}{\alpha-1} -\frac{\alpha^{-(k+1)}-\alpha^{-k}}{\alpha^{-1}-1} \right) \\
        & = (q-2) \gamma_k = \frac{\gamma_{k+1}+2\gamma_k+\gamma_{k-1}}{p-2}
    \end{align*}
    
If $p=3$, we have $\beta = \alpha+1$, $u_k=\gamma_k$, $w_k=\gamma_k-\gamma_{k-1}$. Note that $\gamma_{k+1}-\gamma_k \alpha = \alpha^{-k}$ and $w_k\alpha-w_{k+1}=\alpha^{-k}(\alpha-1)$. 
Thus, if $p=3$, 
    \begin{align*}
    \sum_{i=1}^{\gamma_k} (q-d_{k}(i)) &= \sum_{i=1}^{\gamma_k} (q-U_k(i))
             =  \lfloor (u_k+w_k) \beta \rfloor - \lfloor w_k \beta \rfloor     \\
            &=  \lfloor u_{k+1}+w_{k+1} +u_k+w_k + \alpha^{-k}(\alpha-2) \beta \rfloor - \lfloor w_{k+1}+w_k + \alpha^{-k}(\alpha-1) \rfloor \\
            &= u_{k+1}+u_k = \gamma_{k+1}+\gamma_k = \frac{l_{k+1}+l_k}{q-2}.
    \end{align*}
Summarizing      
    \begin{align} 
        0 &\le m \le \gamma_k+\gamma_{k-1} && \Longleftrightarrow &&  1 \le n-n_k \le  l_k &&\text{ if } p>3, \label{eqn:range-uk1}\\
        0 &\le m \le \gamma_k &&\Longleftrightarrow&&  1 \le n-n_k \le  \gamma_{k+1}+\gamma_k &&\text{ if } p=3. \label{eqn:range-uk2}
    \end{align}     
     
We now deduce a formula for $m=m(n)$, when $m$ and $n$ are within the values given by (\ref{eqn:range-uk1}) and (\ref{eqn:range-uk2}). In this range, $d_k(i)=U_k(i)$ for $i\le m$, and we can write the condition (\ref{eqn:formula-m}) for $m=m(n)$ as:
    \begin{align*} 
        \sum_{i=1}^m (q-U_{k}(i)) & \leq n- n_k -1 < \sum_{i=1}^{m+1} (q-U_{k}(i)) , \\
        \lfloor (m+\delta w_k) \beta \rfloor - \lfloor \delta w_k \beta \rfloor  & \leq n-n_k-1 
            <   \lfloor (m+1+\delta w_k) \beta \rfloor - \lfloor\delta w_k \beta \rfloor.
    \end{align*}
Where $\delta = 1$ if $p=3$ or $p> 4$, and $\delta =2$ if $p=4$.    
 The left-hand side implies,
    \begin{align*}
         m\beta + \delta w_k \beta - 1 -  \lfloor\delta w_k \beta \rfloor & < n-n_k-1 , \\
        m & < \frac{n-n_k- \{\delta w_k \beta \}}{\beta} . 
    \end{align*}
Where $\{\delta w_k \beta \} = \delta w_k\beta -  \lfloor\delta w_k \beta \rfloor$. Similarly, the right-hand side implies,     
    \begin{align*}
     \frac{n-n_k-\{\delta w_k \beta \} -\beta}{\beta} < m. 
    \end{align*}
Since the difference between these bounds is 1, we arrive at the following formula:
    \begin{align} \label{eqn:formula-m2}
        m(n)= \left \lfloor  \frac{n-n_k-\{\delta w_k \beta \}}{\beta} \right \rfloor, \text{ for } 
        \begin{array}{l}
              1\le n-n_k \le l_k, \qquad \quad \text{ if } p>3,    \\
              1\le n-n_k \le \gamma_{k+1}+\gamma_k \, \text{ if } p=3.
        \end{array}
    \end{align}
The explicit value of $\{\delta w_k \beta \} = \delta w_k\beta -  \lfloor\delta w_k \beta \rfloor$ can be computed using  $\gamma_{k+1} - \gamma_k \alpha  = \alpha^{-k}$ and $\beta = \frac{\alpha+1}{p-2}$, obtaining:
   \begin{align*} 
   \{\delta w_k \beta \} =  \begin{cases}
    \alpha^{-k}(\alpha-1)   & \text{if $p=3$ or $4$},\\    
    \alpha^{-k}(\alpha-\beta)   & \text{if $p>4$}.  
  \end{cases}
    \end{align*}

Equation (\ref{eqn:formula-m2}) is the key ingredient for Lemma \ref{lem:m}. It is only left to use the structure of $d_k$ in $p$ blocks. When $p>3$, we have $d_k=U_k^p$, and the length of $U_k$ is $\gamma_k+\gamma_{k-1}$. Now, for any $1\le n-n_k < p\cdot l_k$, 
let 
    \begin{align*}
        j &= \left \lfloor \frac{n-n_k-1 }{l_k} \right \rfloor, \\
        n' &= n-n_k - j \cdot l_k.
    \end{align*}
Then $0\le j\le p-1$ is the position of the block of $d_k$ to which $m(n)$ belongs, and  $n'$ is the number of the position of the  $(n-n_k)$-th tile within that block. 

Let $m'=m(n')$. If $p>3$ we can apply Equation (\ref{eqn:formula-m2}). If $p=3$, then $d_k=(U_kU_{k-1})^3$, and we have to consider whether $m'$ falls  in a $U_k$-block or a $U_{k-1}$-block. Taking this into consideration we arrive at:
   \begin{align*}
        m' = 
        \begin{cases}
            \vspace{.1cm}
            \left \lfloor \frac{n' -  \alpha^{-k}(\alpha-\beta) }{\beta} \right \rfloor, & \text{ if } p>4 , \\
            \vspace{.1cm}
              \left \lfloor \frac{n' - \alpha^{-k}(\alpha-1)  }{\beta} \right \rfloor, & \text{ if } p=3 \text{ and } n' \le \gamma_{k+1}+\gamma_k \text{ or if } p=4, \\
              \left \lfloor \frac{n' -(\gamma_{k+1}+\gamma_k)  - \{ w_{k-1} \beta \} }{\beta} \right \rfloor + \gamma_k, 
              & \text{ if }   p=3 \text{ and }  \gamma_{k+1}+\gamma_k < n' \le l_k.
        \end{cases}
    \end{align*}
 
Finally, $m(n) = m'+ j ( \gamma_k+\gamma_{k-1})$, for all $1\le n-n_k \le p\cdot  l_k$. Now we simplify the expression for $m'$ for the last case. For $p=3$, we have $\{ w_{k-1} \beta \}=\alpha^{-(k-1)}(\alpha-1)$, so:
    \begin{align*}
        \left \lfloor \frac{n' -(\gamma_{k+1}+\gamma_k)  - \{ w_{k-1} \beta \} }{\beta} \right \rfloor + \gamma_k 
            &= \left \lfloor \frac{n' -(\gamma_{k+1}+\gamma_k)  - \alpha^{-(k-1)} (\alpha-1) +\beta \gamma_k}{\beta} \right \rfloor 
    \end{align*}    
We use $\beta=\alpha+1$ (valid for $p=3$) and $\alpha \gamma_k= \gamma_{k+1}-\alpha^{-k}$:
    \begin{align*}
    \left \lfloor \frac{n' -(\gamma_{k+1}+\gamma_k)  - \{ w_{k-1} \beta \} }{\beta} \right \rfloor + \gamma_k  &= \left \lfloor \frac{n' -(\gamma_{k+1}+\gamma_k)  - \alpha^{-(k-1)} (\alpha-1) +\gamma_{k+1} - \alpha^{-k} +  \gamma_k}{\beta} \right \rfloor \\
            &= \left \lfloor \frac{n' - \alpha^{-k}(\alpha (\alpha-1)+1) }{\beta} \right \rfloor 
    \end{align*}    
Summarizing, for any $p$, we can write $m$ as:
    \begin{align*}
        m &= m' + j(\gamma_k+\gamma_{k-1}) = \left \lfloor \frac{n' - \phi_k }{\beta} \right \rfloor +  j(\gamma_k+\gamma_{k-1}),
    \end{align*}
where $\phi_k$ is defined as in (\ref{eqn:phi}).

Now, we simplify the formula for $m$ using the fact that $\frac{l_k}{\beta}- (\gamma_k+\gamma_{k-1})=\alpha^{-k}$.
    \begin{align*}  
    m &  = m' +  j ( \gamma_k+\gamma_{k-1})  ,  \\
        &= \left \lfloor \frac{  n-n_k - \left \lfloor \frac{n-n_k-1 }{l_k} \right \rfloor \cdot l_k - \phi_k }{\beta} \right \rfloor + \left \lfloor \frac{n-n_k-1 }{l_k} \right \rfloor ( \gamma_k+\gamma_{k-1})  \\
        &= \left \lfloor \frac{  n-n_k - \phi_k}{\beta} + \left \lfloor \frac{n-n_k-1 }{l_k} \right \rfloor ( \gamma_k+\gamma_{k-1}) -  \left \lfloor \frac{n-n_k-1 }{l_k} \right \rfloor \frac{ l_k  }{\beta} \right \rfloor  \\
        &= 
    \left \lfloor \frac{n-n_k - \phi_k }{\beta}   - \left \lfloor \frac{n-n_k-1 }{l_k} \right \rfloor \cdot \alpha^{-k} \right \rfloor .
    \end{align*}
Finally, when $n=n_{k+1}$ there is a deficit of 1 in Equation (\ref{eqn:formula-m}), it gives the value of $m= p(\gamma_k+\gamma_{k-1})-1$. This comes from the fact that, when we attach a new tile, we are counting the vertex trapped between the $k$th layer and the previously attached tiles. But when we attach the $n=n_{k+1}$ tile, we are closing the $(k+1)$-layer, so we surround two vertices as opposed to just one. To correct for this special case, we need to add the term $ \left \lfloor \frac{n- n_k }{n_{k+1}-n_k} \right \rfloor $ to the previous formula, thus arriving at equation (\ref{eqn:m}).


\end{proof}

Finally, we prove the bounds that we used in the proof of Lemma \ref{lem:bounds}.

\begin{proof} [Proof of Lemma \ref{lem:bounds}]
We say that a function $F=F(p,q)$ is increasing in $p,q$ if $F$ is increasing in $p$ while $q$ remains constant, and if $F$ is increasing in $q$ while $p$ remains constant. The same convention is adopted for decreasing in $p,q$. We use this concept to bound functions by its values at the cases $\{p,q\}=\{3,7\}, \{4,5\}, \{5,4\}, \{7,3\}$.

For example, $t=(p-2)(q-2)-2$ is increasing in $p,q$, then so is $\alpha=\frac{t+\sqrt{t^2-4}}{2}$. On the other hand, $\alpha^{-1}$ is decreasing in $p,q$, then so is $\frac{1}{\beta} = \frac{\alpha^{-1}+1}{q-2}$. 
Therefore $\frac{1}{\beta} $ is bounded by its biggest value on the cases $\{p,q\}=\{3,7\}, \{4,5\}, \{5,4\}, \{7,3\}$. Then it can be verified that $\frac{1}{\beta}<1.4$ for all $p,q$. Similarly $\frac{1}{\alpha}<0.4$

Next,  $\frac{t}{p} =\frac{pq-2p-2q+2}{p} = q-2-\frac{q-2}{p} $ is increasing in $p,q$. Similarly $\frac{t+1}{p}$, $\frac{t-1}{p}$ and $\frac{t-2}{p}$ are increasing in $p,q$. 
By composition of increasing functions, we obtain that 
    \begin{align*}
        \frac{\alpha}{p} = \frac{1}{2} \left( \frac{t}{p}+\sqrt{\left( \frac{t}{p}\right)^4 - \frac{4}{p}}  \right) 
    \end{align*}
is also increasing in $p,q$. Then, $\frac{p}{\alpha}$ is decreasing in $p,q$. By evaluating at the cases $\{p,q\}=\{3,7\}, \{4,5\}, \{5,4\}, \{7,3\}$, we find that $\frac{p}{\alpha}< 2.7$.

Similarly
    \begin{align*}
        \frac{\alpha-1}{p} = \frac{1}{2} \left( \frac{t-2}{p}+\sqrt{\left( \frac{t}{p}\right)^4 - \frac{4}{p}}  \right) 
    \end{align*}
is increasing in $p,q$. So $\frac{p}{\alpha-1}$ is decreasing in $p,q$. We find that   $\frac{p}{\alpha-1}<4.4$.

Now, we bound $\frac{\phi_2}{\beta}$ for all its cases:
    \begin{align*}
    &    \frac{\alpha-1}{\alpha^{2}\beta}  < \frac{1}{\alpha \beta } <\frac{1}{\beta} < 1.4,     \\
    &   \frac{\alpha(\alpha-1)+1}{\alpha^2 \beta } = \frac{\alpha-1}{\alpha \beta}+ \frac{1}{\alpha^2 \beta} < \frac{2}{\beta}<2.8, \\
    &    \frac{\alpha-\beta}{\alpha\beta} =\frac{1}{\beta}-\frac{1}{\alpha} = \frac{p+2}{\alpha+1}-\frac{1}{\alpha} = \frac{(p-3)\alpha-1}{\alpha(\alpha+1)}  < \frac{(p-3)\alpha}{\alpha(\alpha+1)}  
            < \frac{p}{\alpha+1} < \frac{p}{t} < 2.4 .
    \end{align*}
In conclusion,     $\frac{\phi_2}{\beta}< 2.8$ for all $p,q$. Finally, 
    \begin{align*}
        \frac{p}{\alpha^2} = \frac{p}{\alpha} \cdot \frac{1}{\alpha} < 2.7 \cdot 0.4 < 1.1 .
    \end{align*}
\end{proof}

  

\subsection*{Acknowledgements}
This project received funding from the European Union's Horizon 2020 research and innovation program under the Marie Sk\l odowska-Curie grant agreement No.~754462. This research has also been supported by the DFG Collaborative Research Center SFB/TRR 109 \textit{Discretization in Geometry and Dynamics}. The first author thanks the Laboratory for Topology and Neuroscience for hosting them during part of the project.

\bibliographystyle{amsplain}
\bibliography{holeyominoes}

\end{document}